\newtheorem{theorem}{Theorem}[section]
\newtheorem{proposition}[theorem]{Proposition}
\newtheorem{lemma}[theorem]{Lemma}
\newtheorem{definition}{Definition}[section]
\newtheorem{example}{Example}[section]
\newcommand{\im}{{\mathrm{im}\hspace{0.1em}}}
\newcommand{\rank}{{\mathrm{rank}\hspace{0.1em}}}
\definecolor{myred}{RGB}{180,0.00,0.00}
\definecolor{myblue}{RGB}{0.00,0.00,180}
    \newcommand*{\addFileDependency}[1]{
    \typeout{(#1)}
    \@addtofilelist{#1}
    \IfFileExists{#1}{}{\typeout{No file #1.}}
    }
\title{Persistent interaction topology in data analysis}
\author[1,2]{Jian Liu \thanks{Corresponding author: liujia53@msu.edu}}
\author[2]{Dong Chen}
\author[2,3,4]{Guo-Wei Wei \thanks{Corresponding author: weig@msu.edu}}
\affil[1]{Mathematical Science Research Center, Chongqing University of Technology, Chongqing 400054, China}
\affil[2]{Department of Mathematics, Michigan State University, MI 48824, USA}
\affil[3]{Department of Electrical and Computer Engineering, Michigan State University, MI 48824, USA}
\affil[4]{Department of Biochemistry and Molecular Biology, Michigan State University, MI 48824, USA}
    \renewcommand*{\@fnsymbol}[1]{\ensuremath{\ifcase#1\or \dagger\or *\or *\or
   \mathsection\or \else\@ctrerr\fi}}
\date{}
\begin{document}
    \maketitle

    \paragraph{Abstract}
    Topological data analysis, as a tool for extracting topological features and characterizing geometric shapes, has experienced significant development across diverse fields. Its key mathematical techniques  include persistent homology and the recently developed persistent Laplacians. However, classic mathematical models like simplicial complexes often struggle to provide a localized topological description for interactions or individual elements within a complex system involving a specific set of  elements. In this work, we introduce  persistent interaction homology and persistent interaction Laplacian that emphasize individual interacting elements in the system. We demonstrate the stability of persistent interaction homology as a persistent module. Furthermore, for a finite discrete set of points in the Euclidean space, we provide the construction of persistent interaction Vietoris-Rips complexes and compute their interaction homology and interaction Laplacians. The proposed methods hold significant promise for analyzing heterogeneously interactive  data and emphasizing specific elements in data. Their utility for data science is demonstrated with applications to molecules.

    \paragraph{Keywords}
     Topological data analysis, interaction homology, interaction Laplacian, topological persistence, interaction Vietoris-Rips complex.

\footnotetext[1]
{ {\bf 2020 Mathematics Subject Classification.}  	Primary  55N31; Secondary 62R40,  68T09.
}

    \newpage

\section{Introduction}\label{section:introduction}

In the past decade, topological data analysis (TDA), as a tool capable of capturing the overall and topological features of data, has been widely applied in various fields, including biology, materials science, physics, and computer science. Its key idea is to grasp the various topological invariants of data across different scales. Persistent homology \cite{carlsson2004persistence,edelsbrunner2008persistent,zomorodian2004computing}, as a major tool in TDA, can identify persistent topological structures in a dataset, providing insights into important features that cannot be obtained from traditional data analytic tools. It not only discovers voids and loops in the data but also quantitatively describes the persistence of these topological features and formalizes them into mathematical concepts, thus providing a powerful tool and framework for data science. However, persistent homology has many limitations, such as its indifference to homotopic changes during the filtration and the specific elements and the number of nodes in a loop, which are crucial for complex data.   These challenges were addressed in part by persistent Hodge Laplacians  (or evolutionary de Rham-Hodge theory   \cite{chen2019evolutionary}) and persistent combinatorial Laplacian (or persistent Laplacian \cite{wang2019persistent}) in 2019. The kernel space of the persistent Laplacian is referred to as the persistent harmonic space, which is isomorphic to   persistent homology, implying that the persistent Laplacian encapsulates more information than persistent homology due to its non-harmonic spectrum. Moreover, the stability of the persistent Laplacian ensures its robustness as  features \cite{liu2023algebraic}. Furthermore, researchers have investigated persistent Laplacians on different topological objects   \cite{chen2023persistent,shen2023persistent,wang2023persistent,wei2021persistent}, computational algorithms \cite{memoli2022persistent}, and applications \cite{qiu2023persistent,wee2022persistent,wei2023persistent}.
Recently, persistent Dirac operators have also been developed to achieve similar goals \cite{ameneyro2024quantum,suwayyid2024persistent,wee2023persistent}.
While these new topological formulations provide new perspectives on topological invariants and geometric features of the data, the element-specific formulation proposed in an early work \cite{cang2017topologynet} is still required in practical applications \cite{qiu2023persistent,wee2022persistent,wei2023persistent}.

To further advance TDA, it is imperative to explore alternative topological formulations beyond homology, Laplacian, and Dirac and extract element-specific topological information in contrast to the usual global topological information.
Interaction refers to the dynamic exchange and mutual influence between various components within a system. It encompasses the ways in which these components affect each other's behaviors, properties, and functions, often resulting in emergent phenomena or system-level outcomes that cannot be fully explained by considering individual components in isolation. Interactions can occur at different scales, from interactions between fundamental particles in physics to complex molecular interactions in chemistry, and even to interactions between organisms in biological systems. Understanding interactions is essential for comprehending the dynamics and behavior of systems across various disciplines.

There are numerous methods to describe interactions within systems, with topology being considered particularly adept at capturing the essence of these interactions. Among these, \v{C}ech cohomology \cite{bott1982differential,wells1980differential} stands out as a prominent invariant, focusing on the intersections between open sets in an open cover. A related concept is the nerve complex \cite{alexandroff1928allgemeinen,eilenberg2015foundations}, which provides a topological representation of the relationships between various components within a system. However, both \v{C}ech cohomology and nerve complexes primarily address a form of binary judgment regarding the interactions between different parts of the system: whether they intersect or have some form of association. This characterization, to some extent, is somewhat crude.
In this work, we aim to study the interactions between topological spaces and  characterize these interactions using topological invariants. Specifically, we seek to use topological invariants to capture the structure and dynamics of interacting systems, and to gain insights into their underlying geometric and topological properties for specific elements. One potential approach is the interaction topology developed in \cite{liu2023interaction}.
The notion of interaction cohomology, initially proposed by Oliver Knill as a method for decoding the Wu characteristic \cite{knill2018cohomology}, stands as a pivotal concept in this endeavor.
As the word ``interaction'' suggests, interaction homology reflects the interaction or intersection relationship between simplicial complexes. Interaction (co)homology is derived from the interaction chain complex, which is constructed from a family of tuples of intersection simplices, or ``intersection cells''. Each interaction simplex indicates a interaction among some simplices. By employing interaction topology, one can obtain a different topological description of a cover, which better characterizes the behavior of interactions or the elements of interest. Hence, studying the interaction homotopy and homology of datasets provides us with a topological intuition about the interactions between systems or the points of interest.

In this work, we introduce two new methods, namely persistent interaction homology (PIH) and persistent interaction Laplacian (PIL), built upon the theoretical framework of interaction topology. PIH and PILs provide new topological characteristics, emphasizing spatial interaction relationships or a specific set of elements.  Unlike traditional methods such as persistent homology and persistent Laplacians, which view the space or point set as a whole and calculate Betti numbers and Laplacian matrices accordingly, PIH and PILs focus on describing the interactions and relationships among various elements of the space or point set. PIH and PILs are better than traditional topological methods at capturing the interactions between different parts,   leading to an emphasis of certain elements of interest in complex systems.
Additionally, we demonstrate the stability of persistent interaction homology as a persistent invariant. Furthermore, we introduce interaction Vietoris-Rips complexes, outlining their construction and computation on point sets. Through concrete examples, we illustrate how this new approach provides information on Betti numbers and spectral gaps of Laplacians.


The rest of the paper is organized as follows. In the next section, we review interaction topology, including concepts such as interaction homology and interaction Laplacian. Section \ref{section:PIH} introduces persistent interaction homology and investigates its stability. Additionally, we discuss  interaction Vietoris-Rips complexes and persistent interaction Laplacians. Section \ref{section:application} presents the applications of persistent interaction homology and persistent interaction Laplacian. Finally, we conclude the paper with a summary.

\section{Interaction topology}\label{section:interaction}
Interaction spaces are introduced to describe the interactions among different spaces within a complex system. The interactions between spaces play crucial roles in diverse fields. Our method is built on the topology of the category of interaction spaces \cite{liu2023interaction}. In this section, we will consider the interaction homology and the interaction Laplacians for interaction simplicial complexes.
To enhance the readability of the paper, this section provides several specific examples of computing interaction homology and interaction Laplacian.
From now on, the ground ring is assumed to be a field $\mathbb{K}$. For $\mathbb{K}$-modules $A,B$, we write the tensor product of $A$ and $B$ over $\mathbb{K}$ by $A\otimes B=A\otimes_{\mathbb{K}}B$ for convenience.

\subsection{Interaction homology}
An \emph{$n$-interaction simplicial complex} $(K,\{K_{i}\}_{1\leq i\leq n})$ consists of a simplicial complex $K$ equipped with a family of sub complexes $K_{1},\dots,K_{n}$ such that $K=\bigcup\limits_{i=1}^{n}K_{i}$. A \emph{morphism of $n$-interaction simplicial complexes} $(f,\{f_{i}\}_{1\leq i\leq j}):(K,\{K_{i}\}_{1\leq i\leq n})\to (L,\{L_{i}\}_{1\leq i\leq n})$ consists of  a family of simplicial maps $f_{i}:K_{i}\to L_{i}$ such that for any $1\leq i,j\leq n$, $f_{i}(\sigma_{i})=f_{j}(\sigma_{j})$ if and only if $\sigma_{i}=\sigma_{j}$, where $\sigma_{i}\in K_{i},\sigma_{j}\in K_{j}$. For convenience, we always refer to the interaction simplicial complex as $\{K_{i}\}_{1\leq i\leq n}$ and denote interaction morphism as $\{f_{i}\}_{1\leq i\leq n}:\{K_{i}\}_{1\leq i\leq n}\to\{L_{i}\}_{1\leq i\leq n}$.

Given an interaction simplicial complex $\{K_{i}\}_{1\leq i\leq n}$, we have a family of chain complexes $C_{\ast}(K_{i})$ for $i=1,2,\dots,n$. The tensor product $\bigotimes\limits_{i=1}^{n}C_{\ast}(K_{i})$ is naturally a chain complex with the differential given by
\begin{equation*}
  d(\sigma_{1}\otimes \sigma_{2}\otimes\cdots\otimes \sigma_{n})=\sum\limits_{i=1}^{n}(-1)^{p_{1}+\cdots+p_{i-1}}\sigma_{1}\otimes\cdots \otimes d\sigma_{i}\otimes\cdots\otimes \sigma_{n}.
\end{equation*}
Here, $\sigma_{i}\in K_{p_{i}}$ for $i=1,2,\dots,n$.
Let $D_{\ast}(\{K_{i}\}_{1\leq i\leq n})$ denote the sub $\mathbb{K}$-linear space of $\bigotimes\limits_{i=1}^{n}C_{\ast}(K_{i})$ with generators $\sigma_{1}\otimes \sigma_{2}\otimes\cdots\otimes \sigma_{n}$ satisfying $\bigcap\limits_{i=1}^{n} \sigma_{i}=\emptyset$. It can be verified that $D_{\ast}(\{K_{i}\}_{1\leq i\leq n})$ is the subcomplex of $\bigotimes\limits_{i=1}^{n}C_{\ast}(K_{i})$ with the inherited differential.
The \emph{interaction chain complex on $\{K_{i}\}_{1\leq i\leq n}$} is defined by
\begin{equation*}
  IC_{\ast}(\{K_{i}\}_{1\leq i\leq n}):=\left(\bigotimes\limits_{i=1}^{n}C_{\ast}(K_{i})\right)/D_{\ast}(\{K_{i}\}_{1\leq i\leq n}).
\end{equation*}
Thus the \emph{interaction homology of $\{K_{i}\}_{1\leq i\leq n}$} is given by
\begin{equation*}
  H_{p}(\{K_{i}\}_{1\leq i\leq n}):=H_{p}(IC_{\ast}(\{K_{i}\}_{1\leq i\leq n})),\quad p\geq 0.
\end{equation*}
The rank of $H_{p}(\{K_{i}\}_{1\leq i\leq n})$ is the \emph{interaction Betti number}, and we denote it by $\beta_{p}=\rank H_{p}(\{K_{i}\}_{1\leq i\leq n})$.

\begin{example}\label{example:star_point}
Consider the interaction complex $\{X_{1},X_{2}\}$ (see Fig. \ref{figure:examples}(a)), where
\begin{equation*}
  X_{1}=\{\{v_0\},\{v_1\},\{v_2\},\{v_3\},\{v_0,v_1\},\{v_0,v_2\},\{v_0,v_3\}\},\quad X_{2}=\{\{v_0\}\}.
\end{equation*}
The interaction chain complex $IC_{\ast}(\{X_{1},X_{2}\})$ is generated by the following elements
\begin{equation*}
  \{v_0\}\otimes \{v_0\}, \{v_0,v_{1}\}\otimes \{v_0\}, \{v_0,v_{2}\}\otimes \{v_0\}, \{v_0,v_{3}\}\otimes \{v_0\}.
\end{equation*}
The differential is given by $d_{0}=0$ and
\begin{equation*}
  d_{1}\left(
         \begin{array}{c}
           \{v_0,v_{1}\}\otimes \{v_0\} \\
           \{v_0,v_{2}\}\otimes \{v_0\} \\
           \{v_0,v_{3}\}\otimes \{v_0\} \\
         \end{array}
       \right)=\left(
                 \begin{array}{c}
                   -1 \\
                   -1 \\
                   -1 \\
                 \end{array}
               \right)\left(
                        \begin{array}{c}
                          \{v_0\}\otimes \{v_0\} \\
                        \end{array}
                      \right).
\end{equation*}
Thus the space of cycles of $IC_{\ast}(\{X_{1},X_{2}\})$ is generated by
\begin{equation*}
  \{v_0\}\otimes \{v_0\},\{v_0,v_{2}\}\otimes \{v_0\}-\{v_0,v_{1}\}\otimes \{v_0\},\{v_0,v_{3}\}\otimes \{v_0\}-\{v_0,v_{1}\}\otimes \{v_0\},
\end{equation*}
while the space of boundaries of $IC_{\ast}(\{X_{1},X_{2}\})$ is generated by $\{v_0\}\otimes \{v_0\}$. Hence, we have
\begin{equation*}
  H_{\ast}(IC_{\ast}(\{X_{1},X_{2}\}))=\mathrm{span}_{\mathbb{K}}\langle \{v_0,v_{2}\}\otimes \{v_0\}-\{v_0,v_{1}\}\otimes \{v_0\},\{v_0,v_{3}\}\otimes \{v_0\}-\{v_0,v_{1}\}\otimes \{v_0\}\rangle.
\end{equation*}
So the interaction homology of $\{X_{1},X_{2}\}$ is
\begin{equation*}
  H_{p}(\{X_{1},X_{2}\})\cong \left\{
                           \begin{array}{ll}
                             \mathbb{K}\oplus \mathbb{K}, & \hbox{$p=1$;} \\
                             0, & \hbox{otherwise.}
                           \end{array}
                         \right.
\end{equation*}
The corresponding Betti number for $\{X_{1},X_{2}\}$ is $\beta_{1}=2$ and $\beta_{p}=0$ for $p\neq 1$.
\end{example}

\begin{figure}[htbp]
  \centering
  \includegraphics[width=0.7\textwidth]{./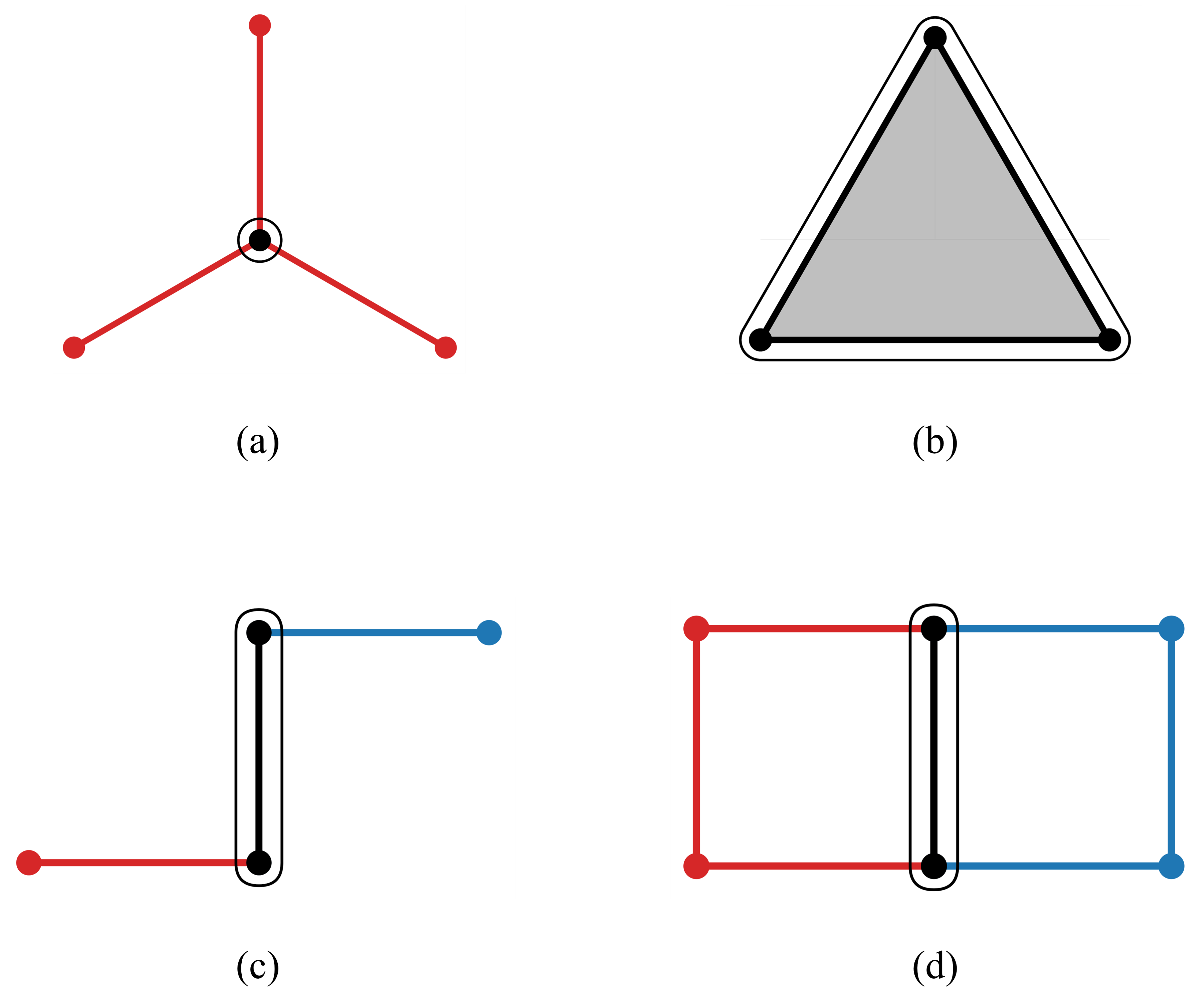}\\
  \caption{The visual representation of interaction complexes presented in examples. The black parts signify the intersection components, while the red and black areas delineate $X_{1}$, and similarly, the blue and black regions illustrate $X_{2}$. (a) The interaction complex showcased in Example \ref{example:star_point}; (b) The interaction complex $\{X_{1},X_{2}\}$ with $X_{1}=X_{2}=\Delta[2]$, as discussed in Example \ref{example:simplex}; (c) The interaction complex considered in Example \ref{example:zig_zag}; (d) The interaction complex that appeared in Example \ref{example:square_square}.}
	\label{figure:examples}
\end{figure}

\begin{example}\label{example:simplex}
Let $\Delta[n]$ be the simplicial complex of the standard $n$-simplex. Consider the interaction complex $\{X_{1},X_{2}\}$ such that $X_{1}=X_{2}=\Delta[n]$, see Fig. \ref{figure:examples}(b) for $n=2$. For the case $n=1$, the complex $\Delta[1]$ has the non-degenerate element $\{0\}$, $\{1\}$, and $\{0,1\}$. Then the interaction chain complex $IC_{\ast}(\{X_{1},X_{2}\})$ is generated by the following elements
\begin{equation*}
  \{0\}\otimes  \{0\},\{1\}\otimes  \{1\},\{0\}\otimes  \{0,1\},\{1\}\otimes  \{0,1\},\{0,1\}\otimes  \{0\},\{0,1\}\otimes  \{1\},\{0,1\}\otimes  \{0,1\}.
\end{equation*}
Besides, the differential of $IC_{\ast}(\{X_{1},X_{2}\})$ is given by $d_{0}=0$,
\begin{equation*}
  d_{1}\left(
         \begin{array}{c}
           \{0\}\otimes  \{0,1\}\\
           \{1\}\otimes  \{0,1\} \\
           \{0,1\}\otimes  \{0\} \\
           \{0,1\}\otimes  \{1\} \\
         \end{array}
       \right)=\left(
                 \begin{array}{cc}
                   -1 & 0 \\
                   0 & 1 \\
                   -1 & 0 \\
                   0 & 1 \\
                 \end{array}
               \right)\left(
                        \begin{array}{c}
                          \{0\}\otimes  \{0\} \\
                          \{1\}\otimes  \{1\} \\
                        \end{array}
                      \right).
\end{equation*}
and
\begin{equation*}
  d_{2}\left(
         \begin{array}{c}
           \{0,1\}\otimes  \{0,1\} \\
         \end{array}
       \right)=\left(
                 \begin{array}{cccc}
                   -1 & 1 & 1 & -1 \\
                 \end{array}
               \right)\left(
         \begin{array}{c}
           \{0\}\otimes  \{0,1\}\\
           \{1\}\otimes  \{0,1\} \\
           \{0,1\}\otimes  \{0\} \\
           \{0,1\}\otimes  \{1\} \\
         \end{array}
                      \right).
\end{equation*}
Let $B_{p}$ be the representation matrix of the differential $d_{p}$ with respect to the chosen basis. Recall that
\begin{equation*}
  \beta_{p}=\dim IC_{p} - \rank B_{p} -\rank B_{p+1}.
\end{equation*}
Thus we have $\beta_{1}=4-2-1=1$ and $\beta_{p}=0$ for $p\neq 1$. It follows that $H_{p}(\{X_{1},X_{2}\})=\left\{
                                                                                                            \begin{array}{ll}
                                                                                                              \mathbb{K}, & \hbox{$p=1$;} \\
                                                                                                              0, & \hbox{otherwise.}
                                                                                                            \end{array}
                                                                                                          \right.
$ It is quite different from the usual simplicial homology.

For the case $n=2$, one can verify that the interaction homology of the interaction complex $\{X_{1},X_{2}\}$ is $H_{p}(\{X_{1},X_{2}\})=\left\{
                                                                                                            \begin{array}{ll}
                                                                                                              \mathbb{K}, & \hbox{$p=2$;} \\
                                                                                                              0, & \hbox{otherwise.}
                                                                                                            \end{array}
                                                                                                          \right.
$
Indeed, the cycle
\begin{equation*}
  x=\{0\}\otimes \{0,1,2\}-\{0,1\}\otimes \{0,2\}+ \{0,2\}\otimes \{0,1\}+\{0,1,2\}\otimes \{0\}
\end{equation*}
gives the homology generator $[x]$. It is impossible for the element $x$ to be a boundary. If $x$ is a boundary of the form $\partial y$, we can write $y=a\{0,1\}\otimes \{0,1,2\}+b\{0,2\}\otimes \{0,1,2\}+c \{1,2\}\otimes \{0,1,2\} +y'$. Here, $y'$ is the sum of other terms. Note that $dy'$ does not contribute to the term of forms $\{0\}\otimes \{0,1,2\}$, $\{1\}\otimes \{0,1,2\}$ or $\{2\}\otimes \{0,1,2\}$. We have that
\begin{equation*}
  a(\{1\}-\{0\})\otimes \{0,1,2\}+b(\{2\}-\{0\})\otimes \{0,1,2\}+c(\{2\}-\{1\})\otimes \{0,1,2\}=\{0\}\otimes \{0,1,2\}.
\end{equation*}
It follows that $\left\{
                  \begin{array}{ll}
                    a+b=-1   \\
                    a-c=0   \\
                    b+c=0
                  \end{array}
                \right.$, which has no solution. Thus $x$ is not a boundary, and $[x]$ is indeed a homology generator.

In general, for any $n\geq 0$, we can obtain that $H_{p}(\{X_{1},X_{2}\})=\left\{
                                                                                                            \begin{array}{ll}
                                                                                                              \mathbb{K}, & \hbox{$p=n$;} \\
                                                                                                              0, & \hbox{otherwise.}
                                                                                                            \end{array}
                                                                                                          \right.$
\end{example}

Consider the case $n=2$ and $K_{1}=K_{2}=K$. The \emph{Wu characteristic} of $K$ is defined by $\omega(K):=\sum\limits_{\sigma\sim \tau}(-1)^{\dim\sigma+\dim\tau}$. Here, $\sigma\sim\tau$ means $\sigma\cap \tau\neq\emptyset$ for $\sigma,\tau\in K$. The Euler theorem asserts that for a simplicial complex $K$, one has
\begin{equation*}
  \mathcal{X}(K)=\sum\limits_{i\geq 0}(-1)^{i}\beta_{i}(K).
\end{equation*}
Here, $\mathcal{X}(K)$ is the Euler characteristic of $K$. Similarly, we have the theorem for Wu characteristic
\begin{equation*}
  \omega(K)=\sum\limits_{i\geq 0}(-1)^{i}\beta_{2,i}(K),
\end{equation*}
where $\beta_{2,i}(K)$ is the interaction Betti number for $\{K_{1},K_{2}\}$ with $K_{1}=K_{2}=K$. It is shown that the interaction homology decodes the Wu characteristic in topology.

\begin{example}
Let $\partial \Delta[2]$ be the boundary of a triangle. The elements of $\partial \Delta[2]$ can be listed as
\begin{equation*}
  \{0\},\{1\},\{2\},\{0,1\},\{0,2\},\{1,2\}.
\end{equation*}
On can prove that the interaction Betti number is $\beta_{2,i}(\partial \Delta[2])=\left\{
                                                                  \begin{array}{ll}
                                                                    1, & \hbox{$i=1,2$;} \\
                                                                    0, & \hbox{otherwise.}
                                                                  \end{array}
                                                                \right.
$ Let us count the interaction pairs. The 0-dimensional pairs are $(\{0\},\{0\})$, $(\{1\},\{1\})$, and $(\{2\},\{2\})$. The 1-dimensional pairs are listed as
\begin{equation*}
\begin{split}
   &  (\{0\},\{0,1\}),(\{0\},\{0,2\}),(\{1\},\{0,1\}),(\{1\},\{1,2\}),(\{2\},\{0,2\}),(\{2\},\{1,2\}), \\
    & (\{0,1\},\{0\}),(\{0,1\},\{1\}),(\{0,2\},\{0\}),(\{0,2\},\{2\}),(\{1,2\},\{1\}),(\{1,2\},\{2\}).
\end{split}
\end{equation*}
The 2-dimensional pairs are represented as
\begin{equation*}
\begin{split}
  &(\{0,1\},\{0,1\}),(\{0,1\},\{0,2\}),(\{0,1\},\{1,2\}),(\{0,2\},\{0,1\}),(\{0,2\},\{0,2\}),\\
  &(\{0,2\},\{1,2\}),(\{1,2\},\{0,1\}),(\{1,2\},\{0,2\}),(\{1,2\},\{1,2\}).
\end{split}
\end{equation*}
There is no interaction pairs of dimension $\geq 3$. It follows that the Wu characteristic is
\begin{equation*}
  \omega(\partial \Delta[2])=\sum\limits_{\sigma\sim \tau}(-1)^{\dim\sigma+\dim\tau}= 3 - 12 + 9 = 0.
\end{equation*}
This is the same as the alternating sum of the interaction Betti numbers
\begin{equation*}
  \omega(\partial \Delta[2]) =\sum\limits_{i\geq 0}(-1)^{i}\beta_{2,i}(\partial \Delta[2]) = 0 -  1 + 1=0.
\end{equation*}

For the example of $\Delta[2]$, one can verify the presence of three 0-dimensional interaction pairs, twelve 1-dimensional interaction pairs, fifteen 2-dimensional interaction pairs, six 3-dimensional interaction pairs, and one 4-dimensional interaction pair. The corresponding Wu characteristic is
\begin{equation*}
  \omega(  \Delta[2]) =\sum\limits_{\sigma\sim \tau}(-1)^{\dim\sigma+\dim\tau}= 3 - 12 + 15 -6 + 1 = 1.
\end{equation*}
This aligns with the calculations in Example \ref{example:simplex}, showing that
\begin{equation*}
  \omega(  \Delta[2]) =\sum\limits_{i\geq 0}(-1)^{i}\beta_{2,i}(\Delta[2]) = 1.
\end{equation*}

\end{example}

Consider the category $\mathbf{IC_{n}}$ of interaction simplicial complexes. The objects in $\mathbf{IC_{n}}$ are the $n$-interaction simplicial complexes, and the morphisms are the morphisms of $n$-interaction simplicial complexes.

For each morphism $\{f_{i}\}_{1\leq i\leq n}:\{K_{i}\}_{1\leq i\leq n}\to \{L_{i}\}_{1\leq i\leq n}$, we have a morphism of chain complexes
\begin{equation*}
  \bigotimes\limits_{i=1}^{n}C_{\ast}(f_{i}): \bigotimes\limits_{i=1}^{n}C_{\ast}(K_{i})\to \bigotimes\limits_{i=1}^{n}C_{\ast}(L_{i}).
\end{equation*}
Composite with the quotient map, we obtain a morphism of chain complexes
\begin{equation*}
   \alpha:\bigotimes\limits_{i=1}^{n}C_{\ast}(K_{i})\to  IC_{\ast}(\{L_{i}\}_{1\leq i\leq n}).
\end{equation*}
Note that $D_{\ast}(\{K_{i}\}_{1\leq i\leq n})\subseteq \ker \alpha$. The morphism of chain complex $\alpha$ induces a morphism of chain complexes
\begin{equation*}
   \theta_{f}:IC_{\ast}(\{K_{i}\}_{1\leq i\leq n})\to  IC_{\ast}(\{L_{i}\}_{1\leq i\leq n})
\end{equation*}
given by $\theta_{f}(\sum\limits_{\sigma_{1},\dots,\sigma_{n}}a_{\sigma_{1},\dots,\sigma_{n}}\sigma_{1}\otimes \cdots\otimes \sigma_{n})=\sum\limits_{\sigma_{1},\dots,\sigma_{n}}a_{\sigma_{1},\dots,\sigma_{n}}f_{1}(\sigma_{1})\otimes \cdots\otimes f_{n}(\sigma_{n})$.
The morphism $\theta_{f}$ induces a map of homology
\begin{equation*}
  H_{\ast}(\theta_{f}):H_{\ast}(\{K_{i}\}_{1\leq i\leq n})\to  H_{\ast}(\{L_{i}\}_{1\leq i\leq n}),[z]\mapsto [\theta_{f}(z)].
\end{equation*}
The following result shows that the interaction homology from the category of interaction complexes to the category of vector spaces is a functor.
\begin{proposition}\label{proposition:functor}
The interaction homology $H_{\ast}:\mathbf{IC_{n}}\to \mathbf{Vec}_{\mathbb{K}}$ is functorial.
\end{proposition}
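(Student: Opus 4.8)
The plan is to verify the two functor axioms directly. Write $\theta_{(-)}$ for the assignment sending a morphism $\{f_i\}_{1\le i\le n}$ to the chain map $\theta_f\colon IC_\ast(\{K_i\}_{1\le i\le n})\to IC_\ast(\{L_i\}_{1\le i\le n})$, and recall from the construction above that $\theta_f$ is a well-defined chain map and that $H_\ast(\theta_f)$ is the map it induces on homology. Then it suffices to prove $\theta_{\mathrm{id}}=\mathrm{id}$ and $\theta_{g\circ f}=\theta_g\circ\theta_f$; since taking homology is itself a functor from chain complexes over $\mathbb{K}$ to $\mathbf{Vec}_{\mathbb{K}}$, applying $H_\ast$ yields $H_\ast(\theta_{\mathrm{id}})=\mathrm{id}$ and $H_\ast(\theta_{g\circ f})=H_\ast(\theta_g)\circ H_\ast(\theta_f)$, which is exactly functoriality of $H_\ast\colon\mathbf{IC_n}\to\mathbf{Vec}_{\mathbb{K}}$.

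For the identity morphism, each $f_i=\mathrm{id}_{K_i}$ induces the identity chain map $C_\ast(f_i)=\mathrm{id}$, hence $\bigotimes_{i=1}^n C_\ast(f_i)$ is the identity, so $\theta_{\mathrm{id}}$ fixes every generator $\sigma_1\otimes\cdots\otimes\sigma_n$ and therefore equals $\mathrm{id}$. For composition, given $\{f_i\}\colon\{K_i\}\to\{L_i\}$ and $\{g_i\}\colon\{L_i\}\to\{M_i\}$ I first check that $\{g_i\circ f_i\}$ is again a morphism of $n$-interaction simplicial complexes: the only point needing argument is the compatibility condition, and for $\sigma_i\in K_i$, $\sigma_j\in K_j$ one has $(g_i\circ f_i)(\sigma_i)=(g_j\circ f_j)(\sigma_j)$ iff $f_i(\sigma_i)=f_j(\sigma_j)$ (by the condition for $\{g_i\}$, applied to $f_i(\sigma_i)\in L_i$ and $f_j(\sigma_j)\in L_j$) iff $\sigma_i=\sigma_j$ (by the condition for $\{f_i\}$). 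Hence $\theta_{g\circ f}$ is defined, and on a generator, functoriality of $C_\ast$ and of the tensor product gives $\theta_{g\circ f}(\sigma_1\otimes\cdots\otimes\sigma_n)=(g_1\circ f_1)(\sigma_1)\otimes\cdots\otimes(g_n\circ f_n)(\sigma_n)=\theta_g(f_1(\sigma_1)\otimes\cdots\otimes f_n(\sigma_n))=\theta_g(\theta_f(\sigma_1\otimes\cdots\otimes\sigma_n))$; since generators span $IC_\ast(\{K_i\}_{1\le i\le n})$, this gives $\theta_{g\circ f}=\theta_g\circ\theta_f$.

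The step I expect to require the most care is the one already folded into the construction of $\theta_f$: that $\bigotimes_{i=1}^n C_\ast(f_i)$ sends $D_\ast(\{K_i\}_{1\le i\le n})$ into $D_\ast(\{L_i\}_{1\le i\le n})$, so that composing with the quotient kills $D_\ast(\{K_i\}_{1\le i\le n})$ and $\theta_f$ descends. This is precisely where the defining compatibility condition on interaction morphisms enters: if $\sigma_1\otimes\cdots\otimes\sigma_n$ generates $D_\ast(\{K_i\}_{1\le i\le n})$, so $\bigcap_{i=1}^n\sigma_i=\emptyset$, suppose toward a contradiction that some vertex $v$ belongs to $\bigcap_{i=1}^n f_i(\sigma_i)$ and choose $w_i\in\sigma_i$ with $f_i(w_i)=v$; then $f_i(\{w_i\})=\{v\}=f_j(\{w_j\})$ for all $i,j$, so the ``only if'' direction of the compatibility condition forces $\{w_i\}=\{w_j\}$, producing a vertex $w$ common to all the $\sigma_i$, contradicting $\bigcap_{i=1}^n\sigma_i=\emptyset$. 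Hence $\bigcap_{i=1}^n f_i(\sigma_i)=\emptyset$ and the image generator lies in $D_\ast(\{L_i\}_{1\le i\le n})$. With this in hand everything above is formal bookkeeping with the functoriality of $C_\ast$, of $\otimes$, of the quotient construction, and of homology.
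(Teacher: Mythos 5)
Your proof is correct and follows essentially the same route as the paper's: direct verification of the functor axioms via the formula for $\theta_f$ on generators, with the paper checking composition on homology classes and you checking it one step earlier at the chain level before applying the homology functor. The only substantive addition is that you actually prove the containment $\bigotimes_{i=1}^{n}C_{\ast}(f_{i})\bigl(D_{\ast}(\{K_{i}\}_{1\leq i\leq n})\bigr)\subseteq D_{\ast}(\{L_{i}\}_{1\leq i\leq n})$ (equivalently $D_{\ast}(\{K_{i}\}_{1\leq i\leq n})\subseteq\ker\alpha$) from the compatibility condition on interaction morphisms, a point the paper asserts without argument in its construction of $\theta_f$.
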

\begin{proof}
Let $\{f_{i}\}_{1\leq i\leq n}:\{K_{i}\}_{1\leq i\leq n}\to \{L_{i}\}_{1\leq i\leq n}$ and $\{g_{i}\}_{1\leq i\leq n}:\{L_{i}\}_{1\leq i\leq n}\to \{M_{i}\}_{1\leq i\leq n}$ be morphisms of interaction simplicial complexes. Then for any cycle $z=\sum\limits_{\sigma_{1},\dots,\sigma_{n}}a_{\sigma_{1},\dots,\sigma_{n}}\sigma_{1}\otimes \cdots\otimes \sigma_{n}\in IC_{\ast}(\{K_{i}\}_{1\leq i\leq n})$, one has
\begin{equation*}
\begin{split}
  H_{\ast}(\theta_{g})\circ H_{\ast}(\theta_{f})([z])= & H_{\ast}(\theta_{g})\left([\sum\limits_{\sigma_{1},\dots,\sigma_{n}}a_{\sigma_{1},\dots,\sigma_{n}}f_{1}(\sigma_{1})\otimes \cdots\otimes f_{n}(\sigma_{n})]\right) \\
    =& [\sum\limits_{\sigma_{1},\dots,\sigma_{n}}a_{\sigma_{1},\dots,\sigma_{n}}g_{1}f_{1}(\sigma_{1})\otimes \cdots\otimes g_{n}f_{n}(\sigma_{n})]\\
    =&  H_{\ast}(\theta_{gf})([z]).
\end{split}
\end{equation*}
By verifying the definition, we obtain that $H_{\ast}$ is functorial.
\end{proof}

\subsection{Interaction Laplacian}\label{section:interaction_laplacian}

On the chain complex of simplicial complexes, there exists a combinatorial Laplacian, and its kernel space is commonly referred to as the harmonic space, which is isomorphic to the simplicial homology of the simplicial complex. In addition to this, researchers have explored Laplacians on path complexes, hypergraphs, and hyperdigraphs. These Laplacians serve to reflect the topological and geometric characteristics of various objects, holding significant potential in data analysis applications such as molecular structure analysis and material structure analysis. In this section, we will explore the Laplacian operator on interaction complexes from now on. The ground field $\mathbb{K}$ is taken to be the real number field $\mathbb{R}$.

Let $\{K_{i}\}_{1\leq i\leq n}$ be an interaction simplicial complex. One can obtain an interaction chain complex $IC_{\ast}(\{K_{i}\}_{1\leq i\leq n})$. We endow $\bigotimes\limits_{i=1}^{n}C_{\ast}(K_{i})$ with the inner product
\begin{equation*}
  \langle \sigma_{1}\otimes\cdots\otimes \sigma_{n},\tau_{1}\otimes\cdots\otimes \tau_{n}\rangle = \prod\limits_{i=1}^{n}\langle \sigma_{i},\tau_{i}\rangle,
\end{equation*}
where $\sigma_{1}\otimes\cdots\otimes \sigma_{n},\tau_{1}\otimes\cdots\otimes \tau_{n}\in \bigotimes\limits_{i=1}^{n}C_{\ast}(K_{i})$ and $\langle \sigma,\tau\rangle=\left\{
                                    \begin{array}{ll}
                                      1, & \hbox{$\sigma=\tau$;} \\
                                      0, & \hbox{otherwise.}
                                    \end{array}
                                  \right.
$
It induces the quotient inner product structure on $IC_{\ast}(\{K_{i}\}_{1\leq i\leq n})$, also denoted by $\langle\cdot,\cdot\rangle$. Note that $IC_{p}(\{K_{i}\}_{1\leq i\leq n})$ and $IC_{q}(\{K_{i}\}_{1\leq i\leq n})$ are orthogonal for $p\neq q$.
Let $d_{p}:IC_{p}(\{K_{i}\}_{1\leq i\leq n})\to IC_{p-1}(\{K_{i}\}_{1\leq i\leq n})$ be the differential at degree $p$. We have the adjoint operator $(d_{p})^{\ast}$, which is given by
\begin{equation*}
  \langle d_{p}x,y\rangle=\langle x,(d_{p})^{\ast}y\rangle
\end{equation*}
for all $x\in IC_{p}(\{K_{i}\}_{1\leq i\leq n}),y\in IC_{p-1}(\{K_{i}\}_{1\leq i\leq n})$. The \emph{$p$-th interaction Laplacian} $\Delta_{p}:IC_{p}(\{K_{i}\}_{1\leq i\leq n})\to IC_{p}(\{K_{i}\}_{1\leq i\leq n})$ is defined by
\begin{equation*}
  \Delta_{p}:=(d_{p})^{\ast}\circ d_{p}+d_{p+1}\circ (d_{p+1})^{\ast},\quad p\geq 0.
\end{equation*}
In particular, $\Delta_{0}=d_{1}\circ (d_{1})^{\ast}$. Let $B_{p}$ be the representation matrix of $B_{p}$ with respect to the standard orthogonal basis. Then the representation matrix of $\Delta_{p}$ is given by
\begin{equation*}
  L_{p} = B_{p}B_{p}^{T}+B_{p+1}^{T}B_{p+1}.
\end{equation*}
The zero eigenvalues of the Laplacian reflect information about its harmonic components, while the non-zero eigenvalues indicate information about its non-harmonic components. Among these, the smallest positive eigenvalue of $L_{p}$, known as the spectral gap, is the most commonly used non-harmonic feature.

\begin{proposition}
The Laplacian $\Delta_{p}$ on $IC_{p}(\{K_{i}\}_{1\leq i\leq n})$ is self-adjoint and non-negative definite.
\end{proposition}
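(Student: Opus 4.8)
The plan is to rely only on the formal structure of the definition $\Delta_p = (d_p)^{\ast}\circ d_p + d_{p+1}\circ (d_{p+1})^{\ast}$ together with the fact, already set up in the excerpt, that $IC_{\ast}(\{K_i\}_{1\leq i\leq n})$ carries a genuine (positive-definite) inner product — the quotient inner product — on each finite-dimensional graded piece $IC_p$, so that the adjoint $(d_p)^{\ast}$ exists, is unique, and satisfies $\langle d_p x, y\rangle = \langle x, (d_p)^{\ast} y\rangle$ for all $x\in IC_p$, $y\in IC_{p-1}$.

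First I would record the elementary linear-algebra fact that for any linear map $T\colon V\to W$ of finite-dimensional inner product spaces, both $T^{\ast}\circ T$ and $T\circ T^{\ast}$ are self-adjoint: using $(AB)^{\ast} = B^{\ast}A^{\ast}$ and $T^{\ast\ast} = T$ one gets $(T^{\ast}T)^{\ast} = T^{\ast}T^{\ast\ast} = T^{\ast}T$ and likewise $(TT^{\ast})^{\ast} = TT^{\ast}$. Applying this to $T = d_p$ and to $T = d_{p+1}$ shows that $(d_p)^{\ast}\circ d_p$ and $d_{p+1}\circ (d_{p+1})^{\ast}$ are self-adjoint endomorphisms of $IC_p$, hence so is their sum $\Delta_p$.

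Next, for non-negativity, I would simply compute, for an arbitrary $x\in IC_p(\{K_i\}_{1\leq i\leq n})$,
\begin{equation*}
  \langle \Delta_p x, x\rangle = \langle (d_p)^{\ast} d_p x, x\rangle + \langle d_{p+1}(d_{p+1})^{\ast} x, x\rangle = \langle d_p x, d_p x\rangle + \langle (d_{p+1})^{\ast} x, (d_{p+1})^{\ast} x\rangle = \|d_p x\|^2 + \|(d_{p+1})^{\ast} x\|^2 \geq 0,
\end{equation*}
where the middle equality is just the defining adjoint relation applied twice. Since this is a sum of two squared norms it is non-negative, which is exactly non-negative definiteness; together with self-adjointness this also confirms that all eigenvalues of the representation matrix $L_p = B_p B_p^{T} + B_{p+1}^{T} B_{p+1}$ are real and non-negative, consistent with the discussion of the spectral gap.

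The only point needing a word of care — and the nearest thing to an obstacle — is that all of this happens on the quotient space $IC_{\ast}$ rather than on $\bigotimes_{i=1}^{n} C_{\ast}(K_i)$; but the excerpt has already verified that $D_{\ast}(\{K_i\}_{1\leq i\leq n})$ is a subcomplex, that the differential descends to $IC_{\ast}$, and that the inner product induces a well-defined quotient inner product, so $d_p$ is a bona fide linear map of finite-dimensional inner product spaces and the argument above applies verbatim, exactly as for the ordinary combinatorial Laplacian. No properties specific to interaction complexes are needed beyond the existence of this inner product structure.
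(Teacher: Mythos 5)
Your proof is correct and is precisely the standard argument the paper has in mind: the paper omits the proof, remarking only that it is standard and referring to \cite{liu2023algebraic}, and your computation ($\Delta_p$ is a sum of $T^{\ast}T$ and $TT^{\ast}$ terms, hence self-adjoint, with $\langle \Delta_p x, x\rangle = \|d_p x\|^2 + \|(d_{p+1})^{\ast}x\|^2 \geq 0$) is exactly that standard argument. Your added care about the quotient inner product on $IC_p$ being a genuine positive-definite inner product is the right point to check and is indeed all that is needed.
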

The non-negative definiteness ensures that the eigenvalues of the operator $\Delta_{p}$ are non-negative. Moreover, we have the algebraic Hodge decomposition on $IC_{p}(\{K_{i}\}_{1\leq i\leq n})$.

\begin{proposition}
$IC_{p}(\{K_{i}\}_{1\leq i\leq n})=\ker \Delta_{p}\oplus \im d_{p+1}\oplus \im (d_{p})^{\ast}$. Here, $\ker \Delta_{p}$ is isomorphic to the interaction homology $H_{p}(\{K_{i}\}_{1\leq i\leq n})$.
\end{proposition}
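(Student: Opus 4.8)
The plan is to run the standard finite-dimensional Hodge decomposition argument, adapted to the quotient complex $IC_{\ast}(\{K_{i}\}_{1\leq i\leq n})$. Write $V_{p}=IC_{p}(\{K_{i}\}_{1\leq i\leq n})$ and $d_{p}\colon V_{p}\to V_{p-1}$; by construction these are finite-dimensional real inner product spaces with $d_{p-1}\circ d_{p}=0$, and the adjoints $(d_{p})^{\ast}$ exist because everything is finite-dimensional. First I would record two elementary facts: (i) $\im d_{p+1}\perp\im (d_{p})^{\ast}$, since $\langle d_{p+1}x,(d_{p})^{\ast}y\rangle=\langle d_{p}d_{p+1}x,y\rangle=0$; and (ii) for any linear map $T$ of finite-dimensional inner product spaces, $(\ker T)^{\perp}=\im T^{\ast}$. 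Next I would compute $\ker\Delta_{p}$: for $z\in V_{p}$, self-adjointness gives $\langle\Delta_{p}z,z\rangle=\langle d_{p}z,d_{p}z\rangle+\langle (d_{p+1})^{\ast}z,(d_{p+1})^{\ast}z\rangle$, so $\Delta_{p}z=0$ if and only if $d_{p}z=0$ and $(d_{p+1})^{\ast}z=0$, i.e. $\ker\Delta_{p}=\ker d_{p}\cap\ker (d_{p+1})^{\ast}$.

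Then I would assemble the decomposition in two steps. By (ii) applied to $d_{p}$, $V_{p}=\ker d_{p}\oplus\im (d_{p})^{\ast}$ orthogonally. Inside $\ker d_{p}$, which contains $\im d_{p+1}$ because $d^{2}=0$, a vector $z\in\ker d_{p}$ lies in $(\im d_{p+1})^{\perp}$ if and only if $\langle z,d_{p+1}x\rangle=\langle (d_{p+1})^{\ast}z,x\rangle=0$ for all $x$, that is, if and only if $(d_{p+1})^{\ast}z=0$; hence $\ker d_{p}=\im d_{p+1}\oplus\bigl(\ker d_{p}\cap\ker (d_{p+1})^{\ast}\bigr)=\im d_{p+1}\oplus\ker\Delta_{p}$ orthogonally. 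Substituting yields $V_{p}=\ker\Delta_{p}\oplus\im d_{p+1}\oplus\im (d_{p})^{\ast}$, and fact (i) together with $\ker\Delta_{p}\subseteq\ker d_{p}\cap\ker (d_{p+1})^{\ast}$ confirms the three summands are pairwise orthogonal, so the sum is genuinely direct. For the homology identification, recall $H_{p}(\{K_{i}\}_{1\leq i\leq n})=\ker d_{p}/\im d_{p+1}$; since $\ker d_{p}=\im d_{p+1}\oplus\ker\Delta_{p}$, the composite $\ker\Delta_{p}\hookrightarrow\ker d_{p}\twoheadrightarrow\ker d_{p}/\im d_{p+1}$ is a linear isomorphism, giving $\ker\Delta_{p}\cong H_{p}(\{K_{i}\}_{1\leq i\leq n})$.

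There is no serious obstacle here: the argument is pure finite-dimensional linear algebra over $\mathbb{R}$. The only point deserving a line of care is that $d_{p}$ and $\langle\cdot,\cdot\rangle$ on $V_{p}$ are the structures \emph{induced on the quotient} $IC_{\ast}=\bigl(\bigotimes_{i}C_{\ast}(K_{i})\bigr)/D_{\ast}(\{K_{i}\}_{1\leq i\leq n})$, so one should note that the quotient differential still satisfies $d^{2}=0$ and that the quotient inner product makes each $V_{p}$ a genuine finite-dimensional inner product space on which the adjoint $(d_{p})^{\ast}$ is well defined — both immediate from the construction in Section~\ref{section:interaction}. Everything else is routine.
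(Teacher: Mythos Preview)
Your argument is correct and is precisely the standard finite-dimensional Hodge decomposition the paper has in mind; the paper itself does not write out a proof but simply remarks that the result is standard and refers to \cite{liu2023algebraic}, whose argument is the one you have reproduced. Nothing needs to be added.
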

The proofs of the above results are standard and can be found in detail in \cite{liu2023algebraic}.

\begin{example}\label{example:zig_zag}
Consider the interaction complex $\{K_{1},K_{2}\}$ (see Fig. \ref{figure:examples} (c)), where
\begin{equation*}
\begin{split}
  K_{1}&=\{\{v_{0}\},\{v_{1}\},\{v_{2}\},\{v_{0},v_{1}\},\{v_{1},v_{2}\}\},\\
  K_{2}&=\{\{v_{1}\},\{v_{2}\},\{v_{3}\},\{v_{1},v_{2}\},\{v_{2},v_{3}\}\}.
\end{split}
\end{equation*}
The interaction chain complex $IC_{\ast}(\{K_{1},K_{2}\})$ of $\{K_{1},K_{2}\}$ is generated by
\begin{equation*}
\begin{split}
    & \{v_{1}\}\otimes \{v_{1}\}, \{v_{2}\}\otimes \{v_{2}\}, \\
    & \{v_{1}\}\otimes \{v_{1},v_{2}\}, \{v_{2}\}\otimes \{v_{1},v_{2}\}, \{v_{2}\}\otimes \{v_{2},v_{3}\},\{v_{0},v_{1}\}\otimes \{v_{1}\},\{v_{1},v_{2}\}\otimes \{v_{1}\},\{v_{1},v_{2}\}\otimes \{v_{2}\},\\
    & \{v_{0},v_{1}\}\otimes \{v_{1},v_{2}\}, \{v_{1},v_{2}\}\otimes \{v_{1},v_{2}\}, \{v_{1},v_{2}\}\otimes \{v_{2},v_{3}\}.
\end{split}
\end{equation*}
The corresponding differential is given by $d_{0}=0$,
\begin{equation*}
  d_{1}\left(
         \begin{array}{c}
           \{v_{1}\}\otimes \{v_{1},v_{2}\} \\
           \{v_{2}\}\otimes \{v_{1},v_{2}\} \\
           \{v_{2}\}\otimes \{v_{2},v_{3}\} \\
           \{v_{0},v_{1}\}\otimes \{v_{1}\} \\
           \{v_{1},v_{2}\}\otimes \{v_{1}\} \\
           \{v_{1},v_{2}\}\otimes \{v_{2}\} \\
         \end{array}
       \right)=\left(
                 \begin{array}{cc}
                   -1 & 0 \\
                   0 & 1 \\
                   0 & -1 \\
                   1 & 0 \\
                   -1 & 0 \\
                   0 & 1 \\
                 \end{array}
               \right)\left(
                        \begin{array}{c}
                          \{v_{1}\}\otimes \{v_{1}\} \\
                          \{v_{2}\}\otimes \{v_{2}\} \\
                        \end{array}
                      \right),
\end{equation*}
and
\begin{equation*}
  d_{2}\left(
         \begin{array}{c}
           \{v_{0},v_{1}\}\otimes \{v_{1},v_{2}\} \\
           \{v_{1},v_{2}\}\otimes \{v_{1},v_{2}\} \\
           \{v_{1},v_{2}\}\otimes \{v_{2},v_{3}\} \\
         \end{array}
       \right)=\left(
                 \begin{array}{cccccc}
                   1 & 0 & 0 & 1 & 0 & 0 \\
                   -1 & 1 & 0 & 0 & 1 & -1 \\
                   0 & 0 & 1 & 0 & 0 & 1 \\
                 \end{array}
               \right)
\left(
         \begin{array}{c}
           \{v_{1}\}\otimes \{v_{1},v_{2}\} \\
           \{v_{2}\}\otimes \{v_{1},v_{2}\} \\
           \{v_{2}\}\otimes \{v_{2},v_{3}\} \\
           \{v_{0},v_{1}\}\otimes \{v_{1}\} \\
           \{v_{1},v_{2}\}\otimes \{v_{1}\} \\
           \{v_{1},v_{2}\}\otimes \{v_{2}\} \\
         \end{array}
       \right).
\end{equation*}
We denote the representation matrix of $d_{p}$ by $B_{p}$. The Laplacian matrices are
\begin{equation*}
  L_{0}=B_{1}^{T}B_{1}=\left(
                         \begin{array}{cc}
                           3 & 0 \\
                           0 & 3 \\
                         \end{array}
                       \right),
\end{equation*}
\begin{equation*}
  L_{1}=B_{1}B_{1}^{T}+B_{2}^{T}B_{2}=\left(
                                                \begin{array}{cccccc}
                                                  3 & -1 & 0 & 0 & 0 & 1 \\
-1 & 2 & -1 & 0 & 1 & 0 \\
0 & -1 & 2 & 0 & 0 & 0 \\
0 & 0 & 0 & 2 & -1 & 0 \\
0 & 1 & 0 & -1 & 2 & -1 \\
1 & 0 & 0 & 0 & -1 & 3 \\
                                                \end{array}
                                              \right),
\end{equation*}
and
\begin{equation*}
  L_{2}=B_{2}B_{2}^{T}=\left(
                         \begin{array}{ccc}
2 & -1 & 0 \\
-1 & 4 & -1 \\
0 & -1 & 2 \\
                         \end{array}
                       \right).
\end{equation*}
The spectra of Laplacians are $\mathbf{Spec}(L_{0})=\{3, 3\}$, $\mathbf{Spec}(L_{1})=\{0, 3-\sqrt{3}, 2, 3, 3, 3+\sqrt{3}\}$, and $\mathbf{Spec}(L_{2})=\{3-\sqrt{3}, 2, 3+\sqrt{3}\}$.
\end{example}

\begin{example}\label{example:square_square}
Consider the interaction complex $\{X_{1},X_{2}\}$ (see Fig. \ref{figure:examples} (d)), where
\begin{equation*}
  X_{1}=\{\{0\},\{1\},\{2\},\{3\},\{0,1\},\{0,2\},\{1,3\},\{2,3\}\}
\end{equation*}
and
\begin{equation*}
  X_{2}=\{\{2\},\{3\},\{4\},\{5\},\{2,3\},\{2,4\},\{3,5\},\{4,5\}\}.
\end{equation*}
Then the interaction chain complex $IC_{\ast}(\{X_{1},X_{2}\})$ has the generators
\begin{equation*}
  \begin{split}
      & \text{0-dim: } \{2\}\otimes \{2\}, \{3\}\otimes \{3\}, \\
      & \text{1-dim: }\{2\}\otimes \{2,3\},\{2\}\otimes \{2,4\},\{3\}\otimes \{2,3\},\{3\}\otimes \{3,5\},\{0,2\}\otimes \{2\}, \{2,3\}\otimes \{2\},\\
      &  \{2,3\}\otimes \{3\}, \{3,5\}\otimes \{3\}, \\
      &\text{2-dim: }\{0,2\}\otimes \{2,3\},\{0,2\}\otimes \{2,4\},\{1,3\}\otimes \{2,3\}, \{1,3\}\otimes \{3,5\},\{2,3\}\otimes \{2,3\},\\
      & \{2,3\}\otimes \{2,4\},\{2,3\}\otimes \{3,5\}.
  \end{split}
\end{equation*}
Choose the interaction simplices as orthogonal basis. Then the interaction homology is
\begin{equation*}
  H_{p}(\{X_{1},X_{2}\})=\left\{
                           \begin{array}{ll}
                             \mathbb{K}, & \hbox{$p=2$;} \\
                             0, & \hbox{otherwise.}
                           \end{array}
                         \right.
\end{equation*}
Moreover, the calculation of Laplacians are shown in Table \ref{table:squre_squre}.
\begin{table}[htb!]
  \centering
  \caption{Illustration of interaction Laplacians in Example \ref{example:square_square}.}\label{table:squre_squre}
  \begin{footnotesize}
  \begin{tabular}{c|c|c|c}
    \hline
    $p$ & $p=0$ & $p=1$& $p=2$  \\
    \hline
    $L_{p}$ & $
  \left(
    \begin{array}{cc}
4 & 0 \\
0 & 4 \\
    \end{array}
  \right)$
     & $
  \left(
    \begin{array}{cccccccc}
3 & 1 & 0 & 0 & 0 & 0 & 0 & 0 \\
1 & 3 & -1 & 0 & 0 & 0 & -1 & 0 \\
0 & -1 & 3 & -1 & 0 & -1 & 0 & 0 \\
0 & 0 & -1 & 3 & 0 & 0 & 0 & 0 \\
0 & 0 & 0 & 0 & 3 & 0 & 1 & 0 \\
0 & 0 & -1 & 0 & 0 & 3 & -1 & -1 \\
0 & -1 & 0 & 0 & 1 & -1 & 3 & 0 \\
0 & 0 & 0 & 0 & 0 & -1 & 0 & 3 \\
    \end{array}
  \right)$
     & $
  \left(
    \begin{array}{ccccccc}
2 & -1 & 0 & 1 & 0 & 0 & 0 \\
-1 & 2 & 0 & 0 & 1 & 0 & 0 \\
0 & 0 & 2 & 1 & 0 & -1 & 0 \\
1 & 0 & 1 & 4 & -1 & 0 & -1 \\
0 & 1 & 0 & -1 & 2 & 0 & 0 \\
0 & 0 & -1 & 0 & 0 & 2 & 1 \\
0 & 0 & 0 & -1 & 0 & 1 & 2 \\
    \end{array}
  \right)$\\
         \hline
    $\beta_{p}$ & 0 & 0 & 1 \\
      \hline
    $\mathbf{Spec}(L_{p})$ &\{4,4\}& $\{2-\sqrt{2},2,2,4-\sqrt{2},2+\sqrt{2},4,4,4+\sqrt{2}\}$ & $\{0,2-\sqrt{2},2,2,4-\sqrt{2},2+\sqrt{2},4+\sqrt{2}\}$\\
    \hline
  \end{tabular}
  \end{footnotesize}
  \end{table}

\end{example}

\section{Persistence on interaction complexes}\label{section:PIH}

\subsection{Persistent interaction homology}
While persistent homology captures the topological features and geometric shapes of spaces, persistent interaction homology provides a topological description of the interactions between spaces. By analyzing the interactions between spaces, we can gain insights into how the topology of the spaces affects their behavior and dynamics. This can have applications in a variety of fields, including biology, physics, and social sciences. In this section, the ground field is assumed to be the field $\mathbb{K}$. If we consider the Laplacian, the ground field $\mathbb{K}$ is taken to be the real number field $\mathbb{R}$.

Let $(X,\leq)$ be a poset with the partial order $\leq$. Then we can regard $(X,\leq)$ as a category with objects given by the elements in $X$ and morphisms of the form $a\to b$ for any $a\leq b$. A \emph{persistence object} on a category $\mathfrak{C}$ is a functor $\mathcal{F}:(X,\leq)\to \mathfrak{C}$.

A \emph{persistence $n$-interaction (simplicial) complex} is a functor $\mathcal{F}=\{\mathcal{F}_{i}\}_{1\leq i\leq n}:(X,\leq)\to \mathbf{IC_{n}}$. By Proposition \ref{proposition:functor}, we have a persistence module
\begin{equation*}
  H_{\ast}\mathcal{F}:(X,\leq)\to \mathbf{Vec}_{\mathbb{K}}.
\end{equation*}
For any elements $a\leq b$ in $X$, the \emph{$(a,b)$-persistent interaction homology} is defined by
\begin{equation*}
  H^{a,b}_{\ast}(\mathcal{F};\mathbb{K}):=\im(H_{\ast}(\mathcal{F}(a);\mathbb{K})\to H_{\ast}(\mathcal{F}(a);\mathbb{K})).
\end{equation*}
The $(a,b)$-persistent interaction Betti number is $\beta_{p}^{a,b}=\dim H^{a,b}_{p}(\mathcal{F};\mathbb{K})$. Just like the traditional persistent homology, the corresponding persistent Betti numbers of persistent interaction homology can also be represented using persistence diagrams and barcodes. The persistence diagrams and barcodes provide a concise and intuitive way to capture the topological information that is persistent across a range of distance or interaction scales.

Typically, the poset $(X,\leq)$ is taken to be the ordered set of integers $(\mathbb{Z},\leq)$ with the usual order.
Classical theorems in persistent homology theory, such as the decomposition theorem of persistence modules and the interval theorem for barcodes, can also be applied to persistent interaction homology. These theorems provide fundamental mathematical tools for analyzing the structure and properties of persistent homology, and can be used to gain deeper insights into the topological features of interacting systems.

\subsection{Stability for persistent interaction homology}

In \cite{chazal2009proximity}, the authors introduced the interleaving distance to describe the algebraic stability theorem for persistence diagrams. Interleaving distance has been shown to be a generalization of the bottleneck distance \cite{bubenik2014categorification} and has become a fundamental tool for analyzing the stability of persistence objects \cite{bauer2020persistence,edelsbrunner2015persistent,liu2023algebraic}. In this section, we will provide a brief overview of the definition of interleaving distance and present the stability theorem for persistent interaction homology.

Let $\mathfrak{C}$ be a category, and let $\mathfrak{C}^{\mathbb{R}}$ be the category of functors from $(\mathbb{R},\leq)$ to $\mathfrak{C}$. Then for any functor $\mathcal{F}:(\mathbb{R},\leq)\to \mathfrak{C}$, we have a functor $\Sigma^{\varepsilon}\mathcal{F}:(\mathbb{R},\leq)\to \mathfrak{C}$ given by $(\Sigma^{\varepsilon}\mathcal{F})(a)=\mathcal{F}(a+\varepsilon)$.
Obviously, $\Sigma^{\varepsilon}|_{\mathcal{F}}:\mathcal{F}\Rightarrow\Sigma^{\varepsilon}\mathcal{F}$ is a natural transformation.

\begin{definition}
Let $\mathcal{F},\mathcal{G}:(\mathbb{R},\leq)\to \mathfrak{C}$ be two persistence objects. An \emph{$\varepsilon$-interleaving between $\mathcal{F}$ and $\mathcal{G}$} consists of two natural transformations $\phi:\mathcal{F}\Rightarrow \Sigma^{\varepsilon}\mathcal{G}$ and $\psi:\mathcal{G}\Rightarrow \Sigma^{\varepsilon}\mathcal{F}$ such that $(\Sigma^{\varepsilon}\psi)\phi=\Sigma^{2\varepsilon}|_{\mathcal{F}}$ and $(\Sigma^{\varepsilon}\phi)\psi=\Sigma^{2\varepsilon}|_{\mathcal{G}}$. We say that $\mathcal{F}$ and $\mathcal{G}$ are \emph{$\varepsilon$-interleaved}.
\begin{equation*}
  \xymatrix@=0.6cm{
  &\Sigma^{\varepsilon}\mathcal{G}\ar@{->}[rd]^{\Sigma^{\varepsilon}\psi}&\\
  \mathcal{F}\ar@{->}[ru]^{\phi}\ar@{->}[rr]^{\Sigma^{2\varepsilon}|_{\mathcal{F}}}&&\Sigma^{2\varepsilon}\mathcal{F}
  }\qquad \qquad
  \xymatrix@=0.6cm{
  &\Sigma^{\varepsilon}\mathcal{F}\ar@{->}[rd]^{\Sigma^{\varepsilon}\phi}&\\
  \mathcal{G}\ar@{->}[ru]^{\psi}\ar@{->}[rr]^{\Sigma^{2\varepsilon}|_{\mathcal{G}}}&&\Sigma^{2\varepsilon}\mathcal{G}
  }
\end{equation*}
\end{definition}
The \emph{interleaving distance} between $\mathcal{F}$ and $\mathcal{G}$ is defined by
\begin{equation*}
  d_{I}(\mathcal{F},\mathcal{G}):=\inf\{\varepsilon\geq 0|\text{$\mathcal{F}$ and $\mathcal{G}$ are $\varepsilon$-interleaved}\}.
\end{equation*}
\begin{theorem}\label{theorem:stability}
Let $\mathcal{F},\mathcal{G}:(\mathbb{R},\leq)\to \mathbf{IC_{n}}$ be two persistence interaction complexes. We have
\begin{equation*}
  d_{I}(H_{\ast}\mathcal{F},H_{\ast}\mathcal{G})\leq d_{I}(\mathcal{F},\mathcal{G}).
\end{equation*}
\end{theorem}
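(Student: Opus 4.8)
The plan is to show that an $\varepsilon$-interleaving between the persistence interaction complexes $\mathcal{F}$ and $\mathcal{G}$ functorially produces an $\varepsilon$-interleaving between the persistence modules $H_{\ast}\mathcal{F}$ and $H_{\ast}\mathcal{G}$, after which taking infima over $\varepsilon$ immediately gives the stated inequality. The key observation is that $H_{\ast}$ together with the assignment $f\mapsto \theta_f$ of Proposition~\ref{proposition:functor} is a functor $\mathbf{IC_n}\to \mathbf{Vec}_{\mathbb{K}}$, and functors commute with the shift operator $\Sigma^{\varepsilon}$ and send natural transformations to natural transformations.

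Here are the steps I would carry out. First, fix $\varepsilon \geq 0$ and suppose $\mathcal{F}$ and $\mathcal{G}$ are $\varepsilon$-interleaved, witnessed by natural transformations $\phi:\mathcal{F}\Rightarrow \Sigma^{\varepsilon}\mathcal{G}$ and $\psi:\mathcal{G}\Rightarrow \Sigma^{\varepsilon}\mathcal{F}$ with $(\Sigma^{\varepsilon}\psi)\phi = \Sigma^{2\varepsilon}|_{\mathcal{F}}$ and $(\Sigma^{\varepsilon}\phi)\psi = \Sigma^{2\varepsilon}|_{\mathcal{G}}$. Second, I would note the elementary fact that for any functor $T:\mathbf{IC_n}\to\mathbf{Vec}_{\mathbb{K}}$ one has $T\circ\Sigma^{\varepsilon}\mathcal{F} = \Sigma^{\varepsilon}(T\circ\mathcal{F})$ as functors $(\mathbb{R},\leq)\to\mathbf{Vec}_{\mathbb{K}}$, since both send $a$ to $T(\mathcal{F}(a+\varepsilon))$ and agree on morphisms; applying this with $T = H_{\ast}$ identifies $H_{\ast}(\Sigma^{\varepsilon}\mathcal{G})$ with $\Sigma^{\varepsilon}(H_{\ast}\mathcal{G})$ and similarly for $\mathcal{F}$. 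Third, a natural transformation $\phi:\mathcal{F}\Rightarrow\Sigma^{\varepsilon}\mathcal{G}$ is, componentwise, a family of interaction morphisms $\phi_a : \mathcal{F}(a)\to \mathcal{G}(a+\varepsilon)$ compatible with the structure maps; postcomposing $H_{\ast}$ and using Proposition~\ref{proposition:functor} (functoriality of $H_{\ast}$, so in particular $H_{\ast}(\theta_{g}\circ\theta_f) = H_{\ast}(\theta_g)\circ H_{\ast}(\theta_f)$ and $H_{\ast}(\mathrm{id}) = \mathrm{id}$) yields a natural transformation $H_{\ast}\phi : H_{\ast}\mathcal{F}\Rightarrow \Sigma^{\varepsilon}(H_{\ast}\mathcal{G})$, and likewise $H_{\ast}\psi : H_{\ast}\mathcal{G}\Rightarrow\Sigma^{\varepsilon}(H_{\ast}\mathcal{F})$. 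Fourth, I would verify the two interleaving triangles: applying the functor $H_{\ast}$ to the identity $(\Sigma^{\varepsilon}\psi)\phi = \Sigma^{2\varepsilon}|_{\mathcal{F}}$ and using that $H_{\ast}$ preserves composition of natural transformations and that $H_{\ast}(\Sigma^{2\varepsilon}|_{\mathcal{F}}) = \Sigma^{2\varepsilon}|_{H_{\ast}\mathcal{F}}$ (again because the shift-comparison natural transformation is built from structure morphisms of $\mathcal{F}$, which $H_{\ast}$ respects) gives $(\Sigma^{\varepsilon}(H_{\ast}\psi))(H_{\ast}\phi) = \Sigma^{2\varepsilon}|_{H_{\ast}\mathcal{F}}$, and symmetrically for the other triangle. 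Hence $H_{\ast}\mathcal{F}$ and $H_{\ast}\mathcal{G}$ are $\varepsilon$-interleaved. Finally, since this holds for every $\varepsilon$ for which $\mathcal{F},\mathcal{G}$ are $\varepsilon$-interleaved, taking the infimum over such $\varepsilon$ gives $d_{I}(H_{\ast}\mathcal{F},H_{\ast}\mathcal{G})\leq d_{I}(\mathcal{F},\mathcal{G})$.

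I do not expect any genuine obstacle here: the statement is the standard "functors are $1$-Lipschitz for interleaving distance" principle, and the only content specific to this paper is that $H_{\ast}$ really is a functor on $\mathbf{IC_n}$, which is exactly Proposition~\ref{proposition:functor}. The one place to be careful is bookkeeping with the shift operator $\Sigma^{\varepsilon}$ and the canonical transformations $\Sigma^{2\varepsilon}|_{\mathcal{F}}$: one must check that $H_{\ast}$ intertwines $\Sigma^{\varepsilon}$ on $\mathbf{IC_n}^{\mathbb{R}}$ with $\Sigma^{\varepsilon}$ on $\mathbf{Vec}_{\mathbb{K}}^{\mathbb{R}}$ and sends the structural comparison map to the structural comparison map. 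This is routine but should be stated explicitly as a lemma (or remarked) so that the verification of the two interleaving triangles is literally just "apply $H_{\ast}$ to the hypothesis." If one wants to avoid even that, an alternative is to invoke the general categorical fact that any functor induces a $1$-Lipschitz map on interleaving distances, citing e.g. \cite{bubenik2014categorification}, and simply observe that $H_{\ast}:\mathbf{IC_n}\to\mathbf{Vec}_{\mathbb{K}}$ is such a functor by Proposition~\ref{proposition:functor}.
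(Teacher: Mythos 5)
Your proposal is correct and is essentially the paper's argument: the paper simply cites Proposition~\ref{proposition:functor} together with the general ``functors are $1$-Lipschitz for the interleaving distance'' result (\cite[Proposition 3.6]{bubenik2014categorification}), which is exactly the lemma you prove by hand in your second through fourth steps and also mention as the shortcut at the end. Your version just unpacks that cited proposition explicitly; the content is the same.
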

\begin{proof}
The proof is completed by Proposition \ref{proposition:functor} and \cite[Proposition 3.6]{bubenik2014categorification}.
\end{proof}

Consider an interaction complex $\{K_{i}\}_{1\leq i\leq n}$ equipped with a non-decreasing real-valued function
\begin{equation*}
  f=(f_{1},\dots,f_{n}):\{K_{i}\}_{1\leq i\leq n}\to \mathbb{R},
\end{equation*}
that is, $f_{i}(\sigma)\leq f_{i}(\tau)$ whenever $\sigma$ is a face of $\tau$ in $K_{i}$ for any $i=1,\dots,n$. For each real number $a$, we have an interaction complex
\begin{equation*}
  \mathcal{F}^{f}(a)=f^{-1}((-\infty,a])=\{(\sigma_{1},\dots,\sigma_{n})\in \{K_{i}\}_{1\leq i\leq n}|f_{i}(\sigma_{i})\leq a,\forall 1\leq i\leq n\}.
\end{equation*}
This construction gives us a persistence interaction complex $\mathcal{F}^{f}:(\mathbb{R},\leq)\to \mathbf{IC_{n}}$.

Now, given two non-decreasing real-valued functions $f,g:\{K_{i}\}_{1\leq i\leq n}\to \mathbb{R}$, we define the \emph{distance} between $f$ and $g$ on $\{K_{i}\}_{1\leq i\leq n}$ as
\begin{equation*}
  \|f-g\|_{\infty}=\sup\limits_{x\in \{K_{i}\}_{1\leq i\leq n}} \|f(x)-g(x)\|.
\end{equation*}
Here, $\|f(\sigma_{1},\dots,\sigma_{n})-g(\sigma_{1},\dots,\sigma_{n})\|=\max\limits_{1\leq i\leq n} |f_{i}(\sigma_{i})-g_{i}(\sigma_{i})|$.
\begin{theorem}
Let $\{K_{i}\}_{1\leq i\leq n}$ be an interaction complex equipped with two non-decreasing real-valued functions $f,g:\{K_{i}\}_{1\leq i\leq n}\to \mathbb{R}$. We have
\begin{equation*}
  d_{I}(H_{\ast}\mathcal{F}^{f},H_{\ast}\mathcal{F}^{g})\leq \|f-g\|_{\infty}.
\end{equation*}
\end{theorem}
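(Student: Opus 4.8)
The plan is to reduce this statement to the already-established Theorem~\ref{theorem:stability} by showing that $\mathcal{F}^{f}$ and $\mathcal{F}^{g}$ are $\varepsilon$-interleaved in $\mathbf{IC_{n}}$ for $\varepsilon = \|f-g\|_{\infty}$. Once that is done, functoriality of $H_{\ast}$ (Proposition~\ref{proposition:functor}) and Theorem~\ref{theorem:stability} immediately give $d_{I}(H_{\ast}\mathcal{F}^{f}, H_{\ast}\mathcal{F}^{g}) \leq d_{I}(\mathcal{F}^{f}, \mathcal{F}^{g}) \leq \|f-g\|_{\infty}$, which is exactly the claim.

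First I would set $\varepsilon := \|f-g\|_{\infty}$ and observe the pointwise containments of sublevel interaction complexes: for every real $a$ and every $i$, if $f_{i}(\sigma_{i}) \leq a$ then $g_{i}(\sigma_{i}) \leq f_{i}(\sigma_{i}) + |g_{i}(\sigma_{i})-f_{i}(\sigma_{i})| \leq a + \varepsilon$, so $\mathcal{F}^{f}(a) \subseteq \mathcal{F}^{g}(a+\varepsilon)$ as an inclusion of $n$-interaction simplicial complexes (each $K_i$-part is a subcomplex, and the inclusion maps trivially satisfy the compatibility condition $f_i(\sigma_i) = f_j(\sigma_j) \Leftrightarrow \sigma_i = \sigma_j$ required of a morphism in $\mathbf{IC_n}$). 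Symmetrically $\mathcal{F}^{g}(a) \subseteq \mathcal{F}^{f}(a+\varepsilon)$. These inclusions are natural in $a$ because they are literal subset inclusions commuting with the structure maps of the persistence objects, so they define natural transformations $\phi: \mathcal{F}^{f} \Rightarrow \Sigma^{\varepsilon}\mathcal{F}^{g}$ and $\psi: \mathcal{F}^{g} \Rightarrow \Sigma^{\varepsilon}\mathcal{F}^{f}$.

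Next I would check the two interleaving triangles. The composite $(\Sigma^{\varepsilon}\psi)\phi$ at level $a$ is the inclusion $\mathcal{F}^{f}(a) \hookrightarrow \mathcal{F}^{g}(a+\varepsilon) \hookrightarrow \mathcal{F}^{f}(a+2\varepsilon)$, which is precisely the structure map $\Sigma^{2\varepsilon}|_{\mathcal{F}^{f}}$ of the persistence object $\mathcal{F}^{f}$ (the inclusion $\mathcal{F}^{f}(a) \hookrightarrow \mathcal{F}^{f}(a+2\varepsilon)$ induced by $a \leq a+2\varepsilon$), since composition of subset inclusions is the subset inclusion. The same argument handles $(\Sigma^{\varepsilon}\phi)\psi = \Sigma^{2\varepsilon}|_{\mathcal{F}^{g}}$. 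This establishes an $\varepsilon$-interleaving, hence $d_{I}(\mathcal{F}^{f}, \mathcal{F}^{g}) \leq \varepsilon = \|f-g\|_{\infty}$, and the conclusion follows from Theorem~\ref{theorem:stability}.

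The only genuinely delicate point—more bookkeeping than obstacle—is verifying that the subset inclusions $\mathcal{F}^{f}(a) \hookrightarrow \mathcal{F}^{g}(a+\varepsilon)$ are legitimate morphisms in $\mathbf{IC_{n}}$, i.e.\ that they respect the defining ``if and only if'' condition on interaction morphisms from Section~\ref{section:interaction}; this is automatic here because the component maps are all identity-on-simplices inclusions, so $f_{i}(\sigma_{i}) = f_{j}(\sigma_{j})$ already forces $\sigma_{i} = \sigma_{j}$ in the ambient complex. One should also note that the supremum defining $\|f-g\|_{\infty}$ is over the finite index set of interaction tuples (so it is attained and finite), which is what makes $\varepsilon$ a well-defined nonnegative real and the interleaving genuine. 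Everything else is the standard sublevel-set interleaving argument transported verbatim to the category $\mathbf{IC_{n}}$.
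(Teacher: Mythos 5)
Your proposal is correct and follows essentially the same route as the paper: set $\varepsilon=\|f-g\|_{\infty}$, use the sublevel-set inclusions $\mathcal{F}^{f}(a)\hookrightarrow\mathcal{F}^{g}(a+\varepsilon)$ and $\mathcal{F}^{g}(a)\hookrightarrow\mathcal{F}^{f}(a+\varepsilon)$ to build an $\varepsilon$-interleaving, and conclude via Theorem~\ref{theorem:stability}. Your additional checks (that the inclusions are genuine morphisms in $\mathbf{IC_{n}}$ and that the supremum is attained) are sensible bookkeeping the paper leaves implicit.
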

\begin{proof}
Let $\varepsilon=\|f-g\|_{\infty}$. Then there are inclusion morphisms $\mathcal{F}^{f}(a)\hookrightarrow \mathcal{F}^{g}(a+\varepsilon)$ and $\mathcal{F}^{g}(a)\hookrightarrow \mathcal{F}^{f}(a+\varepsilon)$ for any real number $a\in \mathbb{R}$. The inclusions induce the natural transformations $\phi:\mathcal{F}^{f}(a)\Rightarrow \Sigma^{\varepsilon}\mathcal{F}^{g}(a)$ and $\psi:\mathcal{F}^{g}(a)\Rightarrow \Sigma^{\varepsilon}\mathcal{F}^{f}(a)$. It follows that
\begin{equation*}
  (\Sigma^{\varepsilon}\psi)\phi: \mathcal{F}^{f}\Rightarrow \mathcal{F}^{f}
\end{equation*}
is an inclusion. So we have $(\Sigma^{\varepsilon}\psi)\phi = \Sigma^{2\varepsilon}|_{\mathcal{F}^{f}}$. Similarly, one has $(\Sigma^{\varepsilon}\phi)\psi = \Sigma^{2\varepsilon}|_{\mathcal{F}^{g}}$. Hence, the persistence interaction complex $\mathcal{F}^{f}$ and $\mathcal{F}^{g}$ are $\varepsilon$-interleaved. By Theorem \ref{theorem:stability}, one has
\begin{equation*}
  d_{I}(H_{\ast}\mathcal{F}^{f},H_{\ast}\mathcal{F}^{g})\leq d_{I}(\mathcal{F}^{f},\mathcal{F}^{g})\leq \varepsilon=\|f-g\|_{\infty}.
\end{equation*}
The desired result follows.
\end{proof}

\subsection{Interaction Vietoris-Rips complexes}
The Vietoris-Rips complex is a commonly used geometric object that can be constructed from a set of points in a metric space. It is a simplicial complex that captures the topological features of the point cloud data, such as its connected components, loops, voids, and higher-dimensional holes. To study the interactions between a family of sets of points, one can construct the corresponding interaction Vietoris-Rips complex. This complex is formed by taking the Vietoris-Rips complexes of each set of points separately and then forming their intersection. The interaction Vietoris-Rips complex is a powerful tool in the study of complex systems where multiple interacting components are present.

Let $X_{1},X_{2},\dots,X_{n}$ be a family of sets of points in Euclidean space. Here, the point sets $X_{1},X_{2},\dots,X_{n}$ do not require to be disjoint. We can obtain a family of Vietoris-Rips complexes $\mathcal{R}_{1}(\varepsilon),\mathcal{R}_{2}(\varepsilon),\dots,\mathcal{R}_{n}(\varepsilon)$. Thus one has an interaction Vietoris-Rips complex $\mathcal{IR}(\varepsilon)=\{\mathcal{R}_{i}(\varepsilon)\}_{1\leq i\leq n}$. Moreover, it gives a persistence interaction complex
\begin{equation*}
  \mathcal{IR}:(\mathbb{R},\leq)\to \mathbf{IC_{n}},\quad \varepsilon\mapsto\{\mathcal{R}_{1}(\varepsilon),\dots,\mathcal{R}_{n}(\varepsilon)\}.
\end{equation*}
Indeed, for real numbers $\varepsilon_{0}\leq \varepsilon_{1}\leq \cdots\leq \varepsilon_{n}$, we have a filtration of interaction complexes
\begin{equation*}
  \mathcal{IR}(\varepsilon_{0})\hookrightarrow \mathcal{IR}(\varepsilon_{1})\hookrightarrow\cdots \hookrightarrow\mathcal{IR}(\varepsilon_{n}).
\end{equation*}
It induces a filtration of interaction homology
\begin{equation*}
  H_{\ast}(\mathcal{IR}(\varepsilon_{0}))\rightarrow H_{\ast}(\mathcal{IR}(\varepsilon_{1}))\rightarrow\cdots \rightarrow H_{\ast}(\mathcal{IR}(\varepsilon_{n})).
\end{equation*}
For $\varepsilon\leq \varepsilon'$, the $(\varepsilon,\varepsilon')$-persistent interaction homology for $X_{1},X_{2},\dots,X_{n}$ is given by
\begin{equation*}
  H^{\varepsilon,\varepsilon'}_{p}=\im (H_{p}(\{\mathcal{R}_{i}(\varepsilon)\}_{1\leq i\leq n})\to H_{p}(\{\mathcal{R}_{i}(\varepsilon')\}_{1\leq i\leq n})
\end{equation*}
for $p\geq 0$.

\begin{example}\label{example:interaction_rips}
Consider two point sets embedded in Euclidean space
\begin{equation*}
  X_{1}=\{(0,0),(1,0),(1,1)\},\quad X_{2}=\{(1,0),(1,1),(2,1)\}.
\end{equation*}
As the filtration parameter $\varepsilon$ grows from $0$ to $+\infty$. One has the interaction complex $\{\mathcal{R}_{1}(\varepsilon),\mathcal{R}_{2}(\varepsilon)\}$ changes at the parameters $\varepsilon=0,1,\sqrt{2}$. The filtration of interaction complexes is shown in Figure \ref{figure:interaction_filtration}(a). Point set $X_{1}$ consists of red and black points, while point set $X_{2}$ is composed of blue and black points. The black points represent the intersection of point sets $X_{1}$ and $X_{2}$.

The persistent interaction homology of ${X_{1},X_{2}}$ provides multiscale topological features of the interaction between the point sets $X_{1}$ and $X_{2}$. Figure \ref{figure:interaction_filtration}(d) illustrates the barcode for the filtration of interaction complexes derived from the interaction of $X_{1}$ and $X_{2}$.
On the other hand, we also examine the Vietoris-Rips complexes on the union of $X_{1}$ and $X_{2}$, as well as the Vietoris-Rips complexes on the intersection of $X_{1}$ and $X_{2}$. The corresponding barcodes are depicted in Figure \ref{figure:interaction_filtration}(b) and \ref{figure:interaction_filtration}(c). In this particular example, we observe that the interaction Vietoris-Rips complexes can capture changes in the filtration parameter at 1 and $\sqrt{2}$, whereas the Vietoris-Rips complexes on $X_{1}\cup X_{2}$ and $X_{1}\cap X_{2}$ only reveal variations at the value of 1. Certainly, this example does not conclusively demonstrate that the interaction Betti numbers contain more information than the Betti numbers of ordinary simplicial homology. However, it does highlight a key point: interaction Betti numbers can capture distinct topological features when multiple point sets are intertwined. Even in cases where the complexes may be contractible, interaction Betti numbers can still exist. In fact, when it comes to one-dimensional features like persistent Betti numbers, it is challenging to definitively determine superiority.
\begin{figure}[htb!]
  \centering
  \includegraphics[width=0.8\textwidth]{./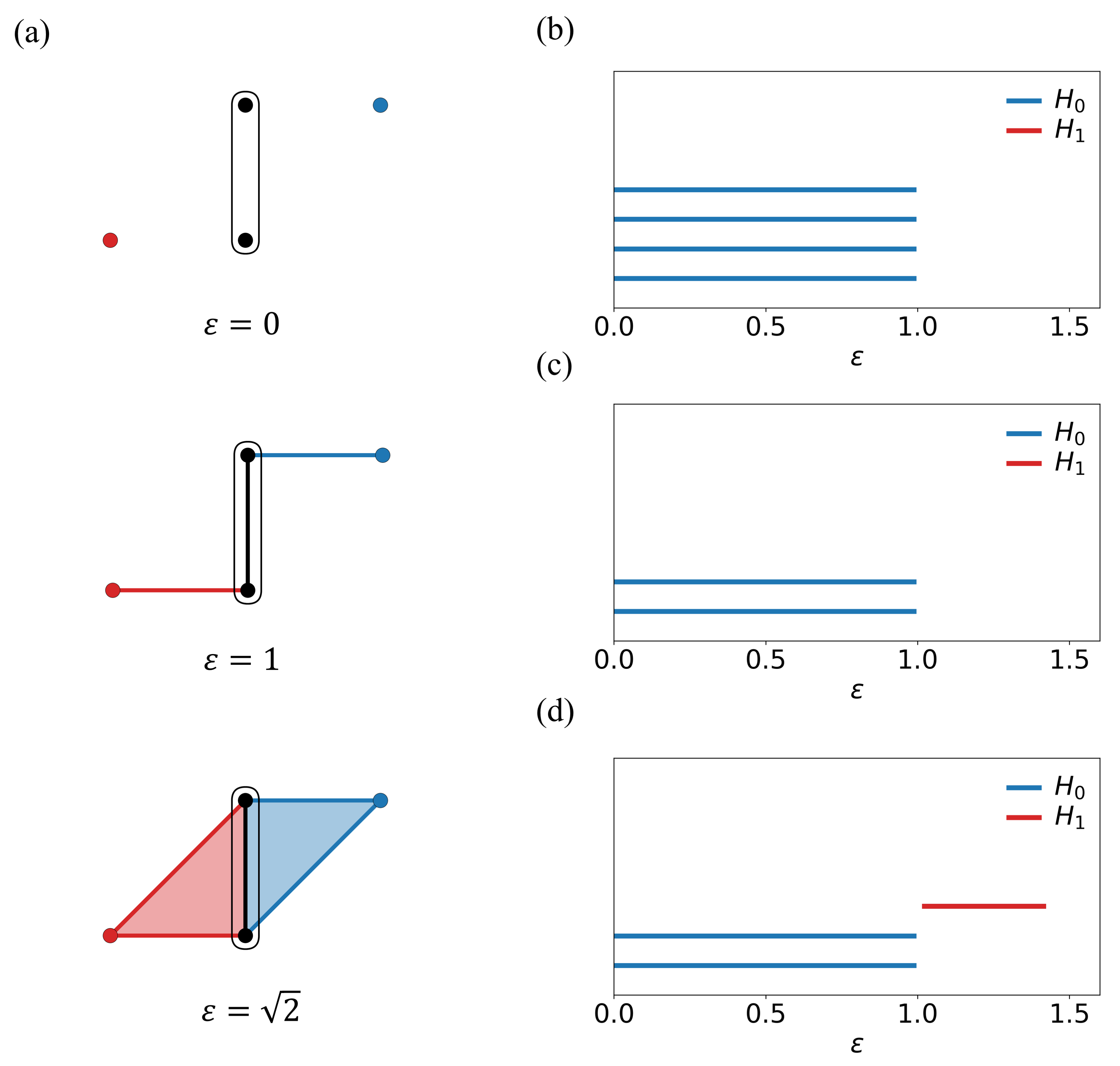}\\
  \caption{(a) The filtration of interaction complexes. The black part represents the intersection component. The red and black points generate the Vietoris-Rips complexes on $X_{1}$, while the filtration of the blue and black points illustrates the Vietoris-Rips complexes on $X_{2}$. The combination of the two interacting filtrations results in the filtration of Vietoris-Rips complexes. (b) The barcode of the Vietoris-Rips complexes on the union of $X_{1}$ and $X_{2}$. (c) The barcode of the Vietoris-Rips complexes on the insection of $X_{1}$ and $X_{2}$. (d) The barcode of the interaction Vietoris-Rips complexes on $\{X_{1},X_{2}\}$.}\label{figure:interaction_filtration}
\end{figure}
The significance of interaction Betti numbers lies in their ability to reflect the interactive relationships within a system, a task not easily achieved with conventional persistent homology alone. In other words, interaction Betti numbers, while reducing computational complexity, focus specifically on the topological properties of relationships between different systems rather than the local topological properties of each individual system.
\end{example}
For other complexes constructed on point set, such as alpha complexes, we can also consider their corresponding interaction complexes and compute the interaction persistent homology.

\subsection{Persistent interaction Laplacians}

Numerous studies have been conducted on the persistent Laplacian \cite{chen2019evolutionary,liu2023algebraic,memoli2022persistent}. In recent years, the persistent Laplacian has also found extensive applications in the fields of biomolecules, drug design, and materials science \cite{chen2022persistent,qiu2023persistent,wang2020persistent,wee2022persistent}. Besides, persistent Laplacians on different objects are also studied \cite{chen2023persistent,wang2023persistent,wei2021persistent}. From now on, we will study the persistent interaction Laplacian and provide some basic calculation examples.

Let $\mathscr{K}_{1}\hookrightarrow \mathscr{K}_{2}\hookrightarrow\cdots \hookrightarrow \mathscr{K}_{n}$ be a filtration of interaction complexes. We endow $IC_{\ast}(\mathscr{K}_{n})$ with the basis as given in Section \ref{section:interaction_laplacian} and regard $IC_{\ast}(\mathscr{K}_{n})$ as an inner product space. By Lemma \ref{lemma:inclusion}, each interaction complex $IC_{\ast}(\mathscr{K}_{i})$ inherits the inner product structure as the subspace of $IC_{\ast}(\mathscr{K}_{n})$.

\begin{lemma}\label{lemma:inclusion}
Let $\{K_{i}\}_{1\leq i\leq n}$ and $\{L_{i}\}_{1\leq i\leq n}$ be interaction complexes. Suppose $\{K_{i}\}_{1\leq i\leq n}$ is a sub interaction space of $\{L_{i}\}_{1\leq i\leq n}$. Then there is a natural inclusion $IC_{\ast}(\{K_{i}\}_{1\leq i\leq n})\hookrightarrow IC_{\ast}(\{L_{i}\}_{1\leq i\leq n})$ of chain complexes.
\end{lemma}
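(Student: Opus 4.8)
The plan is to build the inclusion levelwise from the inclusions of the underlying simplicial complexes and then check that it descends to the interaction quotient and commutes with differentials. First, since $\{K_i\}_{1\le i\le n}$ is a sub interaction space of $\{L_i\}_{1\le i\le n}$, each $K_i$ is a subcomplex of $L_i$, so we get inclusions of chain complexes $\iota_i: C_\ast(K_i)\hookrightarrow C_\ast(L_i)$. Taking tensor products over $\mathbb{K}$ yields an injective chain map $\bigotimes_{i=1}^n \iota_i: \bigotimes_{i=1}^n C_\ast(K_i)\to \bigotimes_{i=1}^n C_\ast(L_i)$ (injectivity is automatic over a field, as each $\iota_i$ is split injective); that it is a chain map follows directly from the formula for the tensor differential given in the excerpt, since the $\iota_i$ intertwine the individual differentials $d_{K_i}$ and $d_{L_i}$.

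Next I would verify that this map sends the subspace $D_\ast(\{K_i\})$ into $D_\ast(\{L_i\})$. This is immediate: a generator $\sigma_1\otimes\cdots\otimes\sigma_n$ of $D_\ast(\{K_i\})$ has $\bigcap_{i=1}^n\sigma_i=\emptyset$, and its image is the same tensor of simplices (now regarded in the $L_i$), which still satisfies $\bigcap_{i=1}^n\sigma_i=\emptyset$, hence lies in $D_\ast(\{L_i\})$. Therefore $\bigotimes_i\iota_i$ induces a chain map on the quotients
\begin{equation*}
  IC_\ast(\{K_i\}_{1\le i\le n}) = \Big(\bigotimes_{i=1}^n C_\ast(K_i)\Big)\big/ D_\ast(\{K_i\}_{1\le i\le n}) \longrightarrow IC_\ast(\{L_i\}_{1\le i\le n}).
\end{equation*}

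Finally I would check injectivity of the induced map, which is the only genuinely substantive point. The induced map on quotients is injective precisely when $\big(\bigotimes_i\iota_i\big)^{-1}\big(D_\ast(\{L_i\})\big) = D_\ast(\{K_i\})$; the inclusion $\supseteq$ is the content of the previous paragraph, so what must be shown is $\subseteq$. Because $D_\ast(\{L_i\})$ is spanned by the monomial basis elements $\tau_1\otimes\cdots\otimes\tau_n$ with empty total intersection, and $\bigotimes_i\iota_i$ carries the monomial basis of $\bigotimes_i C_\ast(K_i)$ bijectively onto a subset of the monomial basis of $\bigotimes_i C_\ast(L_i)$, an element of $\bigotimes_i C_\ast(K_i)$ whose image lies in $D_\ast(\{L_i\})$ must already be a combination of those $K$-monomials with empty total intersection, i.e. it lies in $D_\ast(\{K_i\})$. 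Thus the induced map is injective, and functoriality of the construction in the obvious way shows it is natural; this is the claimed inclusion of chain complexes. The main obstacle, such as it is, is this basis-compatibility bookkeeping rather than anything deep — one just needs to be careful that ``sub interaction space'' really does force each $K_i\subseteq L_i$ compatibly, so that the monomial bases match up and no collapsing occurs in the quotient.
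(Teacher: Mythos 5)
Your proposal is correct and follows essentially the same route as the paper: both construct the map as the one induced on quotients by the tensor product of the levelwise inclusions, and both reduce injectivity to the identity $\bigl(\bigotimes_{i} C_{\ast}(K_{i})\bigr)\cap D_{\ast}(\{L_{i}\}_{1\leq i\leq n})=D_{\ast}(\{K_{i}\}_{1\leq i\leq n})$, which the paper states as an intersection of subspaces and you justify via the compatibility of monomial bases. The only cosmetic difference is that the paper packages the argument as a diagram chase through the two short exact sequences defining $IC_{\ast}$, whereas you argue directly with preimages.
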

\begin{proof}
By construction, we have the following commutative diagram.
\begin{equation*}
  \xymatrix{
  0\ar@{->}[r]&D_{\ast}(\{K_{i}\}_{1\leq i\leq n}) \ar@{^{(}->}[r]^{\iota} \ar@{^{(}->}[d]^{j|_{D_{\ast}(\{K_{i}\}_{1\leq i\leq n})}}& \bigotimes C_{\ast}(\{K_{i}\}_{1\leq i\leq n})\ar@{->}[r]^{\pi} \ar@{^{(}->}[d]^{j}& IC_{\ast}(\{K_{i}\}_{1\leq i\leq n})\ar@{->}[r]\ar@{->}[d]^{\bar{j}}&0\\
    0\ar@{->}[r]&D_{\ast}(\{L_{i}\}_{1\leq i\leq n})\ar@{^{(}->}[r]^{\widetilde{\iota}}& \bigotimes C_{\ast}(\{L_{i}\}_{1\leq i\leq n})\ar@{->}[r]^{\widetilde{\pi}}& IC_{\ast}(\{L_{i}\}_{1\leq i\leq n})\ar@{->}[r]&0
  }
\end{equation*}
Here, the map $\bar{j}:IC_{\ast}(\{K_{i}\}_{1\leq i\leq n})\to IC_{\ast}(\{L_{i}\}_{1\leq i\leq n})$ is given by $\bar{j}(\bar{x})=j(x)$. We will first prove $\bar{j}$ is injective.
Suppose $\bar{j}(\overline{x})=0$ for some $\overline{x}\in IC_{\ast}(\{K_{i}\}_{1\leq i\leq n})$. There exists an element $x\in \bigotimes C_{\ast}(\{K_{i}\}_{1\leq i\leq n})$ such that $\pi(x)=\overline{x}$. Consequently, $\widetilde{\pi} j(x)=\bar{j}\pi(x)=0$. This implies $j(x)\in \ker\widetilde{\pi}=\im \widetilde{\iota}$. Therefore, we have $j(x)=\widetilde{\iota}(y)$ for some $y\in D_{\ast}(\{L_{i}\}_{1\leq i\leq n})$. Note that the maps $j$ and $\widetilde{\iota}$ are inclusions. Thus we have
\begin{equation*}
  x=y\in \left(\bigotimes C_{\ast}(\{K_{i}\}_{1\leq i\leq n})\right)\cap D_{\ast}(\{L_{i}\}_{1\leq i\leq n})=D_{\ast}(\{K_{i}\}_{1\leq i\leq n}).
\end{equation*}
Consequently, we obtain $\overline{x}=\pi(x)=\pi(\iota(x))=0$. It follows that $\bar{j}$ is injective. The naturality follows from the definition.
\end{proof}

Let $\mathcal{F}:(\mathbb{R},\leq)\to \mathbf{IC_{n}}$ be a persistence interaction complex. For real numbers $a\leq b$, we have an induced morphism of interaction chain complexes
\begin{equation*}
  \theta^{a,b}:IC_{\ast}(\mathcal{F}(a))\to IC_{\ast}(\mathcal{F}(b)).
\end{equation*}
Lemma \ref{lemma:inclusion} ensures that the morphism $\theta^{a,b}$ establishes $IC_{\ast}(\mathcal{F}(a))$ as an inner product subspace of $IC_{\ast}(\mathcal{F}(b))$.
Let $IC_{p+1}^{a,b}=\{x\in IC_{p+1}(\mathcal{F}(b))|dx\in \theta^{a,b}(IC_{p}(\mathcal{F}(a)))\}$. Then we have an inclusion $\iota_{p+1}^{a,b}:IC_{p+1}^{a,b}\hookrightarrow IC_{p+1}(\mathcal{F}(b))$. Let $d_{p+1}^{a,b}:IC_{p+1}^{a,b}\to IC_{p}(\mathcal{F}(a))$ be the composition of the following morphisms
\begin{equation*}
  \xymatrix{
  IC_{p+1}^{a,b}\ar@{->}[r]^-{\iota^{a,b}_{p+1}}&IC_{p+1}(\mathcal{F}(b))\ar@{->}[r]^-{d_{p+1}^{b}}&IC_{p}(\mathcal{F}(b))\ar@{->}[r]^-{(\theta_{p}^{a,b})^{\ast}}&IC_{p}(\mathcal{F}(a)).
  }
\end{equation*}
Then we have a commutative diagram
\begin{equation*}
    \xymatrix@=0.8cm{
  IC_{p+1}(\mathcal{F}(a))\ar@{->}[rr]^{ d_{p+1}^{a}}\ar@{^{(}->}[dd]_{\theta^{a,b}_{p+1}}&&\quad IC_{p}(\mathcal{F}(a))\quad\ar@<0.75ex>[rr]^-{\textcolor[rgb]{0.00,0.07,1.00}{ d_{p}^{a}} } \ar@{^{(}->}[dd]^{\theta^{a,b}_{p}}\ar@<0.75ex>[ld]^-{\textcolor[rgb]{0.00,0.07,1.00}{( d_{p+1}^{a,b})^{\ast}}}&&\quad IC_{p-1}(\mathcal{F}(a))\ar@<0.75ex>[ll]^-{\textcolor[rgb]{0.00,0.07,1.00}{( d_{p}^{a})^{\ast}}}\ar@{^{(}->}[dd]^{\theta^{a,b}_{p-1}}\\
                           &IC_{p+1}^{a,b}\ar@<0.75ex>[ru]^-{\textcolor[rgb]{0.00,0.07,1.00}{ d_{p+1}^{a,b}} }\ar@{^{(}->}[ld]_{\iota_{p+1}^{a,b}}&&&                       \\
  IC_{p+1}(\mathcal{F}(b))\ar@{->}[rr]^{ d_{p+1}^{b}}&&\quad IC_{p}(\mathcal{F}(b))\quad\ar@{->}[rr]^{ d_{p}^{b}}&&\quad IC_{p-1}(\mathcal{F}(b)).
  }
\end{equation*}
The \emph{$p$-th $(a,b)$-persistent interaction Laplacian} $\Delta_{p}^{a,b}:IC_{p}(\mathcal{F}(a))\to IC_{p}(\mathcal{F}(a))$ is defined by
\begin{equation*}
  \Delta_{p}^{a,b}:= d_{p+1}^{a,b}\circ ( d_{p+1}^{a,b})^{\ast}+( d_{p}^{a})^{\ast}\circ d_{p}^{a}.
\end{equation*}
In particular, when $a=b$, the persistent interaction Laplacian $\Delta_{p}^{a,b}$ coincides with the interaction Laplacian $\Delta_{p}^{a}$ on $IC_{p}(\mathcal{F}(a))$. Similarly, the persistent interaction Laplacian operator $\Delta_{p}^{a,b}$ is self-adjoint and non-negative definite. The eigenvalues for $\Delta_{p}^{a,b}$ consist of the spectral of the operator. The smallest positive eigenvalue is the spectral gap, while the second smallest eigenvalue is called the Fiedler vector. The kernel of $\Delta_{p}^{a,b}$ is a subspace of $IC_{p}(\mathcal{F}(a))$, referred to as the $(a,b)$-persistent interaction harmonic space. It is worth noting that the complex $IC_{p}(\mathcal{F}(a))$ has the combinatorial Hodge decomposition with the persistent interaction harmonic space as a direct sum component. More precisely, we have the following proposition.
\begin{proposition}
For any real numbers $a\leq b$, we have $IC_{p}(\mathcal{F}(a))=\mathcal{H}_{p}^{a,b}\oplus \im d_{p+1}^{a,b}\oplus \im (d_{p}^{a})^{\ast}$. Here, the persistent interaction harmonic space $\mathcal{H}_{p}^{a,b}=\ker \Delta_{p}^{a,b}$.
\end{proposition}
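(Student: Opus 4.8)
The plan is to reduce the statement to the standard finite-dimensional Hodge decomposition for a pair of maps $A,B$ with $B\circ A=0$, applied here to $A:=d_{p+1}^{a,b}$ and $B:=d_{p}^{a}$; the only point requiring real work is the relation $d_{p}^{a}\circ d_{p+1}^{a,b}=0$, everything else being routine linear algebra. First I would record what Lemma~\ref{lemma:inclusion} provides: $\theta^{a,b}:IC_{\ast}(\mathcal{F}(a))\to IC_{\ast}(\mathcal{F}(b))$ is an injective chain map that realizes its source as an inner-product subspace, so each $\theta_{p}^{a,b}$ is an isometric embedding, i.e. $(\theta_{p}^{a,b})^{\ast}\theta_{p}^{a,b}=\mathrm{id}_{IC_{p}(\mathcal{F}(a))}$. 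From the definition $IC_{p+1}^{a,b}=\{x\in IC_{p+1}(\mathcal{F}(b)):d_{p+1}^{b}x\in\theta^{a,b}(IC_{p}(\mathcal{F}(a)))\}$, for each $x\in IC_{p+1}^{a,b}$ we may write $d_{p+1}^{b}\iota_{p+1}^{a,b}x=\theta_{p}^{a,b}(z)$ for a unique $z$, whence $d_{p+1}^{a,b}x=(\theta_{p}^{a,b})^{\ast}\theta_{p}^{a,b}(z)=z$; equivalently $\theta_{p}^{a,b}\circ d_{p+1}^{a,b}=d_{p+1}^{b}\circ\iota_{p+1}^{a,b}$, which is the commuting triangle in the displayed diagram.

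Next I would prove $B\circ A=d_{p}^{a}\circ d_{p+1}^{a,b}=0$. For $x\in IC_{p+1}^{a,b}$, using that $\theta^{a,b}$ is a chain map together with the identity just obtained,
\begin{equation*}
  \theta_{p-1}^{a,b}\bigl(d_{p}^{a}d_{p+1}^{a,b}x\bigr)=d_{p}^{b}\bigl(\theta_{p}^{a,b}d_{p+1}^{a,b}x\bigr)=d_{p}^{b}d_{p+1}^{b}\bigl(\iota_{p+1}^{a,b}x\bigr)=0,
\end{equation*}
and since $\theta_{p-1}^{a,b}$ is injective this forces $d_{p}^{a}d_{p+1}^{a,b}x=0$.

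With $BA=0$ in hand I would run the usual Hodge argument, exactly as for the non-persistent interaction Laplacian in \cite{liu2023algebraic}. Self-adjointness and non-negativity of $\Delta_{p}^{a,b}=AA^{\ast}+B^{\ast}B$ give, via $\langle\Delta_{p}^{a,b}v,v\rangle=\|A^{\ast}v\|^{2}+\|Bv\|^{2}$, that $\mathcal{H}_{p}^{a,b}=\ker\Delta_{p}^{a,b}=\ker A^{\ast}\cap\ker B$. From $BA=0$ we get $\langle Ax,B^{\ast}y\rangle=\langle BAx,y\rangle=0$, so $\im A\perp\im B^{\ast}$ and hence $\im B^{\ast}\subseteq(\im A)^{\perp}=\ker A^{\ast}$. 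Now decompose orthogonally $IC_{p}(\mathcal{F}(a))=\im A\oplus\ker A^{\ast}$, and inside $\ker A^{\ast}$ split off $\im B^{\ast}$ with orthogonal complement $\ker A^{\ast}\cap(\im B^{\ast})^{\perp}=\ker A^{\ast}\cap\ker B$; this yields
\begin{equation*}
  IC_{p}(\mathcal{F}(a))=\im A\oplus\im B^{\ast}\oplus(\ker A^{\ast}\cap\ker B)=\im d_{p+1}^{a,b}\oplus\im(d_{p}^{a})^{\ast}\oplus\mathcal{H}_{p}^{a,b},
\end{equation*}
with the three summands mutually orthogonal, which is the assertion.

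I expect the main obstacle to be precisely the identity $d_{p}^{a}\circ d_{p+1}^{a,b}=0$: unlike $d^{b}\circ d^{b}=0$ this is not a plain ``$d^{2}=0$'' but relies on $d_{p+1}^{a,b}$ being the composite of $\iota_{p+1}^{a,b}$, $d_{p+1}^{b}$, and the projection $(\theta_{p}^{a,b})^{\ast}$, so the argument genuinely uses that $\theta^{a,b}$ is simultaneously an isometric embedding and a chain map---both guaranteed by Lemma~\ref{lemma:inclusion}. Once that identity is established the remainder is the standard finite-dimensional Hodge decomposition and needs no further idea.
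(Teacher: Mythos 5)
Your proposal is correct and follows essentially the same route as the paper: both reduce the statement to the identity $d_{p}^{a}\circ d_{p+1}^{a,b}=0$, proved via the isometric-embedding property $(\theta_{p}^{a,b})^{\ast}\theta_{p}^{a,b}=\mathrm{id}$ and the injectivity of $\theta_{p-1}^{a,b}$ from Lemma~\ref{lemma:inclusion}, and then invoke the standard algebraic Hodge decomposition. The only difference is that you spell out the final orthogonal-decomposition step which the paper delegates to a citation.
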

\begin{proof}
For any $x\in IC_{p+1}^{a,b}$, we have $d_{p+1}^{b}x=\theta^{a,b}_{p}(y)$ for some $y\in IC_{p}(\mathcal{F}(a))$. It follows that
\begin{equation*}
  d_{p}^{a}  d_{p+1}^{a,b}x=d_{p}^{a}(\theta_{p}^{a,b})^{\ast}d_{p+1}^{b}\iota^{a,b}_{p+1}(x)=d_{p}^{a}(\theta_{p}^{a,b})^{\ast}\theta^{a,b}_{p}(y).
\end{equation*}
Since the injection $\theta^{a,b}$ gives $IC_{\ast}(\mathcal{F}(a))$ the inner product structure inherited from $IC_{\ast}(\mathcal{F}(b))$, we have
\begin{equation*}
  \langle (\theta_{p}^{a,b})^{\ast}\theta^{a,b}_{p}(y),z\rangle=\langle \theta^{a,b}_{p}(y),\theta^{a,b}_{p}(z)\rangle=\langle y,z\rangle
\end{equation*}
for any $z\in IC_{\ast}(\mathcal{F}(a))$. It follows that $(\theta_{p}^{a,b})^{\ast}\theta^{a,b}_{p}(y)=y$. Thus we have
\begin{equation*}
  \theta^{a,b}_{p-1} (d_{p}^{a}  d_{p+1}^{a,b}x)=\theta^{a,b}_{p-1} (d_{p}^{a}y)= d_{p}^{b}\theta^{a,b}_{p}(y)=d_{p}^{b}d_{p+1}^{b}x=0.
\end{equation*}
This shows that $d_{p}^{a}  d_{p+1}^{a,b}=0$. By the algebraic Hodge decomposition theorem, one has the decomposition
\begin{equation*}
  IC_{p}(\mathcal{F}(a))=\mathcal{H}_{p}^{a,b}\oplus \im d_{p+1}^{a,b}\oplus \im (d_{p}^{a})^{\ast},
\end{equation*}
where $\mathcal{H}_{p}^{a,b}=\ker (d_{p+1}^{a,b})^{\ast}\cap \ker d_{p}^{a}$.
\end{proof}
For the simplicial case, the kernel of the persistent Laplacian is isomorphic to the corresponding persistent homology. In the interaction case, the persistent harmonic space associated to $\Delta_{p}^{a,b}$ is also isomorphic to the corresponding persistent interaction homology. By \cite[Theorem 3.6]{liu2023algebraic}, one has the following result.
\begin{proposition}
For any real numbers $a\leq b$, we have a natural isomorphism $\mathcal{H}_{p}^{a,b}\cong H_{p}^{a,b}(\mathcal{F};\mathbb{R})$.
\end{proposition}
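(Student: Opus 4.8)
The plan is to reduce the statement to the abstract algebraic isomorphism between a persistent harmonic space and a persistent homology group already established in \cite[Theorem 3.6]{liu2023algebraic}, by checking that the data we have set up on interaction chain complexes fits the hypotheses of that theorem verbatim. First I would recall the precise setup needed for the cited result: one needs a morphism of chain complexes $\theta^{a,b}: C_\ast \to C'_\ast$ which, degreewise, realizes $C_\ast$ as an inner product subspace of $C'_\ast$, and then one defines $C_{p+1}^{a,b} = \{x \in C'_{p+1} \mid d'x \in \theta^{a,b}(C_p)\}$, the induced boundary $d_{p+1}^{a,b}$ and the persistent Laplacian exactly as in the paragraph preceding the Proposition. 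The cited theorem then gives $\ker \Delta_p^{a,b} \cong \operatorname{im}(H_p(C_\ast) \to H_p(C'_\ast))$. So the main task is to verify that $IC_\ast(\mathcal{F}(a)) \hookrightarrow IC_\ast(\mathcal{F}(b))$, with the inner product structure of Section \ref{section:interaction_laplacian}, satisfies every one of these conditions.

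The key steps, in order, are: (1) invoke Lemma \ref{lemma:inclusion} (together with the remark right after it, that $\theta^{a,b}$ makes $IC_\ast(\mathcal{F}(a))$ an inner product subspace of $IC_\ast(\mathcal{F}(b))$) to supply the degreewise isometric inclusion of chain complexes; here one should note that the standard orthonormal basis of interaction simplices on $\mathcal{F}(b)$ restricts to the standard basis on $\mathcal{F}(a)$, since the interaction simplices of $\mathcal{F}(a)$ are literally a subset of those of $\mathcal{F}(b)$, so the inclusion really is an isometry onto its image and $(\theta_p^{a,b})^\ast \theta_p^{a,b} = \operatorname{id}$, a fact already used in the proof of the preceding Proposition. (2) Observe that the objects $IC_{p+1}^{a,b}$, $d_{p+1}^{a,b} = (\theta_p^{a,b})^\ast \circ d_{p+1}^b \circ \iota_{p+1}^{a,b}$, and $\Delta_p^{a,b} = d_{p+1}^{a,b}(d_{p+1}^{a,b})^\ast + (d_p^a)^\ast d_p^a$ as defined in the excerpt coincide exactly with the constructs in \cite{liu2023algebraic}. (3) Apply \cite[Theorem 3.6]{liu2023algebraic} to conclude $\mathcal{H}_p^{a,b} = \ker \Delta_p^{a,b} \cong \operatorname{im}(H_p(IC_\ast(\mathcal{F}(a))) \to H_p(IC_\ast(\mathcal{F}(b))))$, and then identify the right-hand side with $H_p^{a,b}(\mathcal{F};\mathbb{R})$ by the very definition of persistent interaction homology given in Section \ref{section:PIH}. (4) For naturality, note that the isomorphism in the cited theorem is natural with respect to morphisms of such filtration data, and that a morphism of persistence interaction complexes induces, via Proposition \ref{proposition:functor} and the functoriality of the Laplacian construction under isometric chain morphisms, a compatible morphism of all the pieces; naturality then transfers.

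The step I expect to be the only real point requiring care is step (1): one must be sure that the inner product on $IC_\ast(\mathcal{F}(a))$ used to form $\Delta_p^{a,b}$ is genuinely the \emph{restriction} of the inner product on $IC_\ast(\mathcal{F}(b))$ and not the intrinsic quotient inner product of $\mathcal{F}(a)$ computed in isolation — these agree precisely because the generating interaction simplices are nested and orthonormal, but if the quotient by $D_\ast$ introduced different identifications at level $a$ versus level $b$ the adjoints $(\theta_p^{a,b})^\ast$ and hence $d_{p+1}^{a,b}$ would change. Lemma \ref{lemma:inclusion} and the orthonormality of the interaction-simplex basis resolve this, so once that compatibility is recorded the rest is a direct citation. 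Everything else — self-adjointness, non-negativity, the Hodge decomposition — has already been established in the preceding Proposition and is not needed again here; the proof is therefore short and essentially consists of pointing to \cite[Theorem 3.6]{liu2023algebraic} after this verification.
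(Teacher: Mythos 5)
Your proposal matches the paper's argument: the paper proves this proposition solely by invoking \cite[Theorem 3.6]{liu2023algebraic}, exactly as you do in step (3), and your additional verification that the inclusion $IC_\ast(\mathcal{F}(a))\hookrightarrow IC_\ast(\mathcal{F}(b))$ is an isometry onto its image (so the hypotheses of the cited theorem apply) is the same compatibility the paper relies on via Lemma \ref{lemma:inclusion} and the preceding Hodge-decomposition proposition. Your write-up is simply a more explicit version of the paper's one-line citation.
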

The spectrum of the persistent interaction Laplacian $\Delta_{p}^{a,b}$ comprises the eigenvalues of $\Delta_{p}^{a,b}$. The harmonic part of the spectrum corresponds to the zero eigenvalues.
The non-zero eigenvalues of $\Delta_{p}^{a,b}$, referred to as the non-harmonic spectrum of $\Delta_{p}^{a,b}$, capture the geometric information of interaction complexes. Compared to the spectral description of simplicial complexes, which captures the connectivity information between individual simplices, the spectrum of the interaction complex describes the connectivity between interaction pairs. Among these eigenvalues, the smallest positive eigenvalue, denoted by $\tilde{\lambda}_{p}^{a,b}$, holds crucial significance for various applications. Here, we make the convention that if the smallest positive eigenvalue does not exist, we set $\tilde{\lambda}_{p}^{a,b}=0$.

\begin{example}
Example \ref{example:interaction_rips} continued. In this example, we will compute the interaction Laplacians of interaction Vietoris-Rips complexes. The smallest eigenvalues of the interaction Laplacians provide us with crucial features for datesets. By a straightforward calculation, we have that
\begin{equation*}
  \tilde{\lambda}_{0}(\varepsilon)=\left\{
                   \begin{array}{ll}
                     0, & \hbox{$0\leq \varepsilon<1$;} \\
                     3, & \hbox{$1\leq \varepsilon<\sqrt{2}$;} \\
                     4, & \hbox{$\sqrt{2}\leq \varepsilon$.}
                   \end{array}
                 \right.\qquad
\tilde{\lambda}_{1}(\varepsilon)=\left\{
                   \begin{array}{ll}
                     0, & \hbox{$0\leq \varepsilon<1$;} \\
                     3-\sqrt{3}, & \hbox{$1\leq \varepsilon<\sqrt{2}$;} \\
                     2, & \hbox{$\sqrt{2}\leq \varepsilon$.}
                   \end{array}
                 \right.
\end{equation*}
Figure \ref{figure:eigenvalue} shows that curves of smallest positive eigenvalues of interaction Laplacians. As the filtration parameter $\varepsilon$ increases, the smallest positive eigenvalues of interaction Laplacians undergo gradual changes. Specifically, notable variations in the curve are observed at $\varepsilon=1$ and $\varepsilon=\sqrt{2}$. This indicates that the eigenvalues of Laplacians can capture essential information about the key filtration parameters.
\begin{figure}[htbp]
  \centering
  \includegraphics[width=0.9\textwidth]{./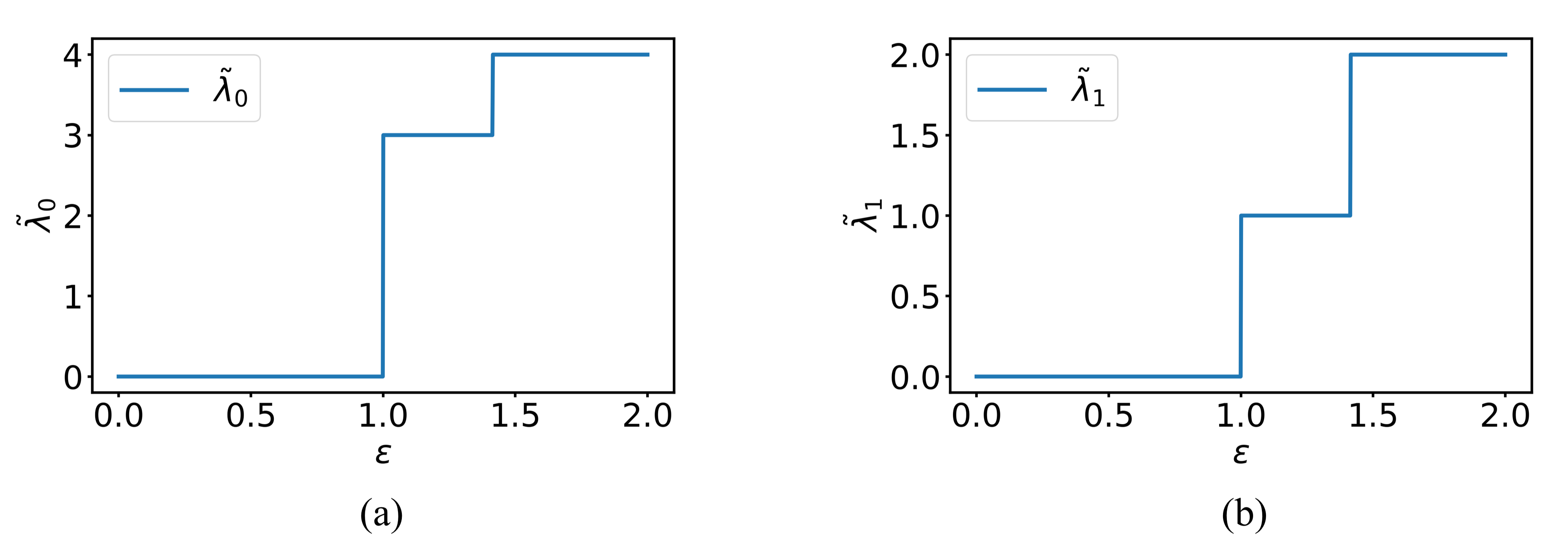}\\
  \caption{(a) The curve of smallest positive eigenvalues of interaction Laplacian at dimension $0$; (b) The curve of smallest positive eigenvalues of interaction Laplacian at dimension $1$.}\label{figure:eigenvalue}
\end{figure}
\begin{equation*}
  \tilde{\lambda}_{1}(\varepsilon)=\left\{
                   \begin{array}{ll}
                     0, & \hbox{$0\leq \varepsilon<1$;} \\
                     2-\sqrt{2}, & \hbox{$1\leq \varepsilon<\sqrt{2}$;} \\
                     2, & \hbox{$\sqrt{2}\leq \varepsilon<\sqrt{5}$.}\\
                     4, & \hbox{$\sqrt{5}\leq \varepsilon$.}\\
                   \end{array}
                 \right.
\end{equation*}
Figure \ref{figure:other_eigenvalues} shows that curves of smallest positive eigenvalues of usual Laplacians of $X_{1}\cup X_{2}$ and $X_{1}\cap X_{2}$. The 0-dimensional eigenvalue curve $\tilde{\lambda}_{0}$ and the 1-dimensional eigenvalue curve $\tilde{\lambda}_{1}$ coincide for the complexes $X_{1}\cup X_{2}$ and $X_{1}\cap X_{2}$, respectively. For the case of the complex $X_{1}\cup X_{2}$, Figures \ref{figure:other_eigenvalues}(a) and (b) do not clearly convey the variation of eigenvalues for the filtration parameter $\varepsilon=1$. Similarly, for the complex $X_{1}\cap X_{2}$, Figures \ref{figure:other_eigenvalues}(c) and (d) do not adequately capture the changes in eigenvalues for the filtration parameter $\varepsilon=\sqrt{2}$.

\begin{figure}[htb!]
  \centering
  \includegraphics[width=0.9\textwidth]{./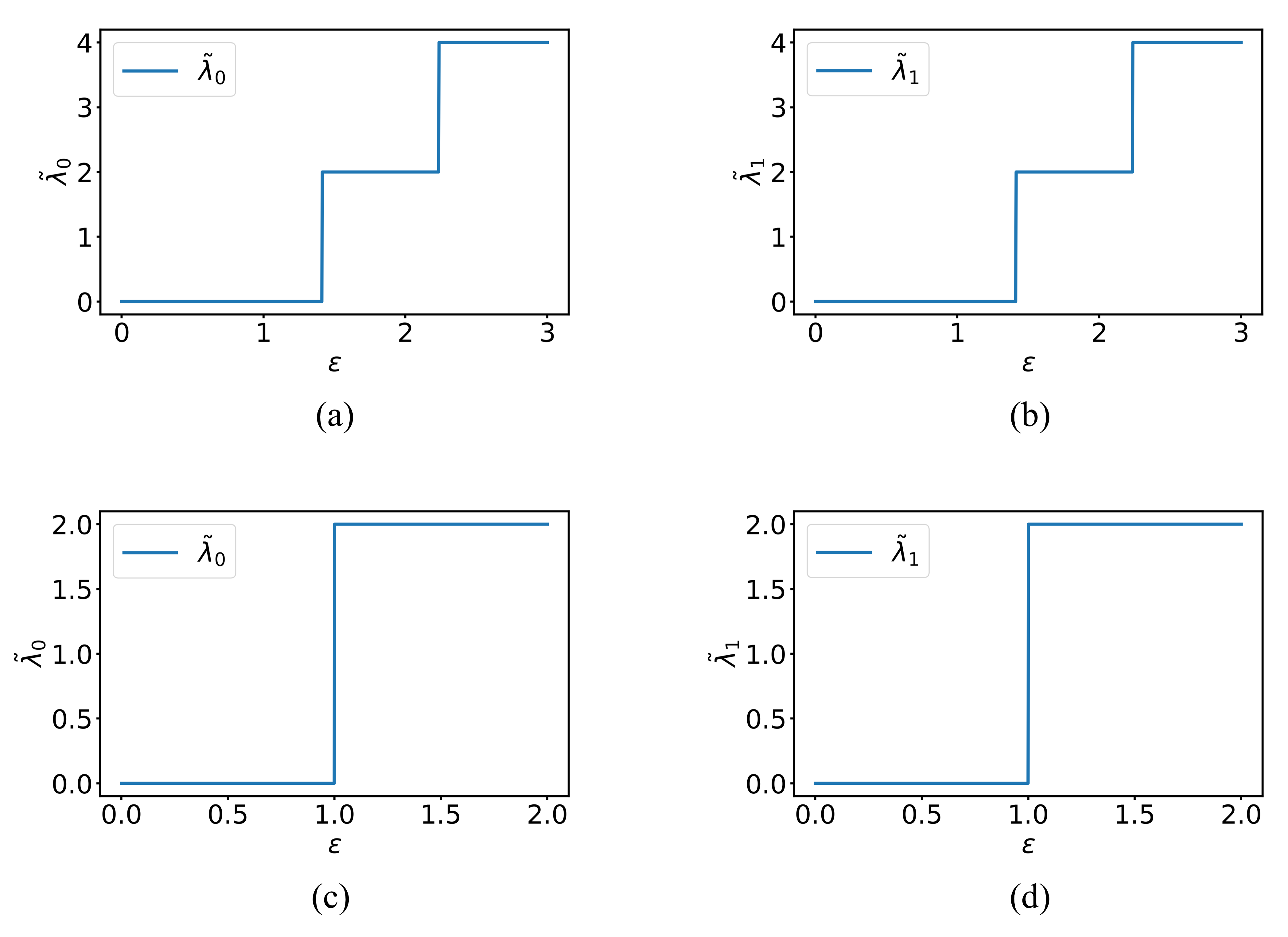}\\
  \caption{(a) The curve of smallest positive eigenvalues of the usual Laplacian of $X_{1}\cup X_{2}$ at dimension $0$; (b) The curve of smallest positive eigenvalues of the usual Laplacian of $X_{1}\cup X_{2}$ at dimension $1$; (c) The curve of smallest positive eigenvalues of the usual Laplacian of $X_{1}\cap X_{2}$ at dimension $0$; (d) The curve of smallest positive eigenvalues of the usual Laplacian of $X_{1}\cap X_{2}$ at dimension $1$.}\label{figure:other_eigenvalues}
\end{figure}
This example highlights the distinctive nature of the interaction Laplacian compared to the conventional Laplacians on simplicial complexes. The interaction Laplacian can offer typical topological and geometric insights into the interaction space, making it a promising tool for various applications.
\end{example}

\section{Applications}\label{section:application}

In this section, we apply persistent interaction homology (PIH) and persistent interaction Laplacians (PILs) to analyze molecular structures. Typically, a molecule comprises various elements, each playing a distinct role in its composition. Moreover, interactions and collaborations exist among different types of atoms within a molecule, implying the presence of internal interactions. Building upon this understanding, we attempt to employ PIH and PILs for element-specific molecular structure analysis. Specifically, interaction Betti numbers and interaction spectral gaps are utilized as topological features to characterize internal interactions within the molecule. We consider two examples: one involving the structural analysis of closo-carboranes. In closo-carboranes, which contain carbon, nitrogen, and hydrogen atoms, the position of carbon atoms is crucial. This prompts us to focus on carbon elements as the interaction components. Through the computation of interaction features, we find that interaction spectral gaps indeed reflect structural information of closo-carboranes molecules. Another application we consider is chlorophyll. In chlorophyll molecules, nitrogen and magnesium atoms are core components responsible for absorbing light energy and converting it into chemical energy. Here, we focus on nitrogen elements as the interaction components and compute their interaction Betti numbers and interaction spectral gaps features. These calculations and applications underscore the vast potential of PIH and PILs.

\subsection{Structural analysis of closo-carboranes}

In this work, we will analyze the structure of closo-carboranes, specifically $\mathrm{C_{2}B_{n-2}H_{n}}$, where $n$ ranges from 5 to 20. Each closo-carborane molecule consists of atoms of three elements: C, B, and H. Typically, the atomic coordinates of these C, B, and H atoms are treated as points, which can be transform into a filtration of Vietoris-Rips complexes. Subsequently, the persistent homology and persistent Laplacian are computed. For each dimension $n$, two curves are obtained: one representing the Betti numbers as a function of the filtration parameter, denoted as $\beta_{n}(t)$, and the other depicting the smallest positive eigenvalue of the Laplacian, also known as the spectral gap, with respect to the filtration parameter, expressed as $\lambda_{n}(t)$. These curves provide insights into the essential topological features and geometric properties of the data points.

One closo-carborane, $\mathrm{C_{2}B_{n-2}H_{n}}$, can be considered as the union of point sets $X_{1}$ and $X_{2}$ corresponding to $\{\mathrm{C_{2}B_{n-2}}\}$ and $\{\mathrm{C_{2}H_{n}}\}$, respectively. Additionally, the intersection of $X_{1}$ and $X_{2}$ consists of points formed by two carbon atoms. As illustrated in Figure \ref{figure:B18_exam}(b), $\mathrm{C_{2}B_{18}H_{20}}$ can be viewed as an interaction between $\{\mathrm{C_{2}B_{18}}\}$ and $\{\mathrm{C_{2}H_{20}}\}$. This interaction reflects the relationships and interactions among atoms of different elements, providing a more nuanced representation than treating all elements as equivalent points. Furthermore, we obtain information from the persistent homology and persistent Laplacians of these distinct sets. In our approach, we treat closo-carboranes as interaction systems, constructing their corresponding interaction Vietoris-Rips complexes. We then compute the interaction homology and interaction Laplacian, resulting in the corresponding interaction Betti curves and interaction spectral gap curves.

\begin{figure}[htbp]
  \centering
  \includegraphics[width=0.9\textwidth]{./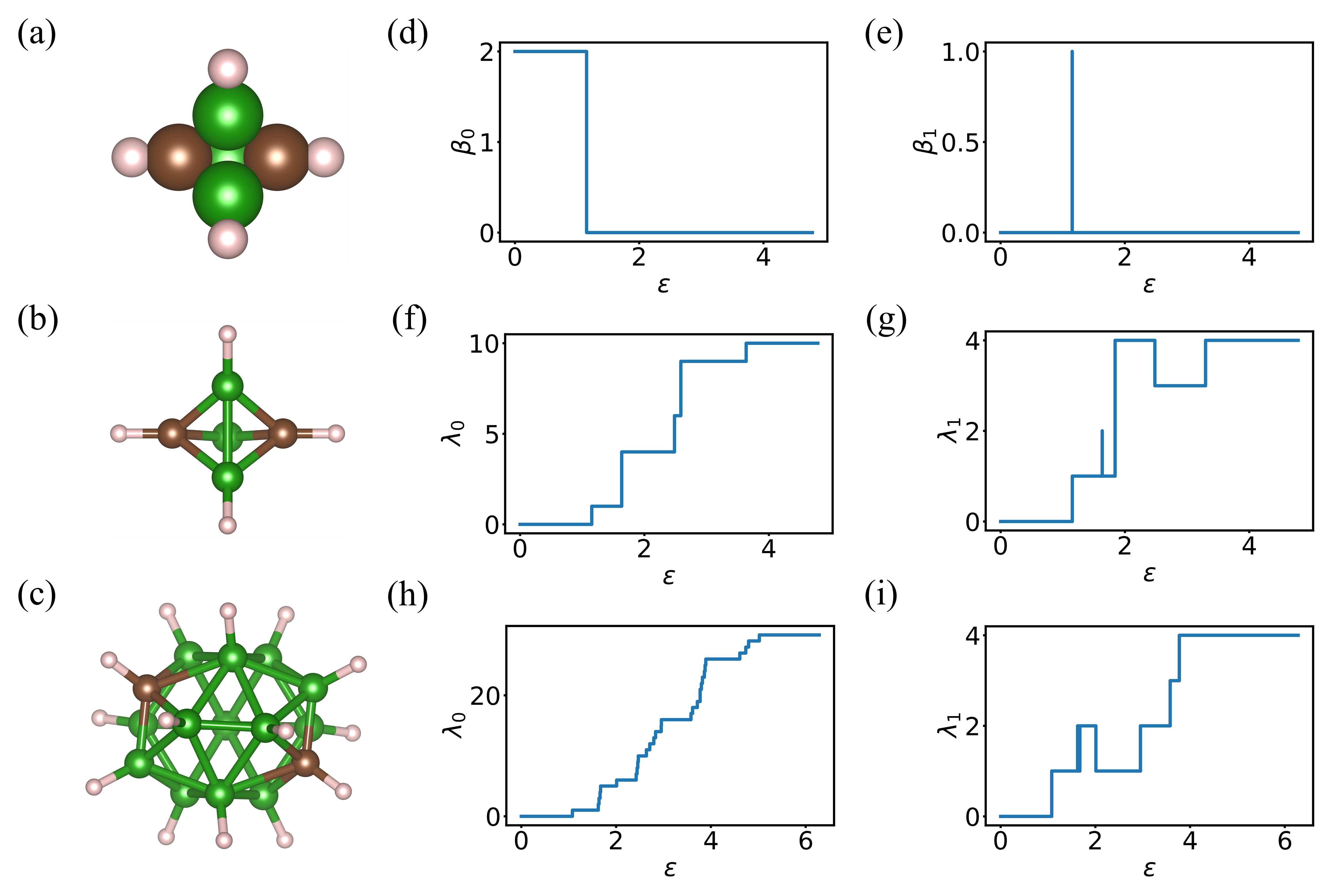}\\
  \caption{(a) The ball representation of the most stability configuration of $\mathrm{C_{2}B_{3}H_{5}}$. (b) The ball-stick representation of the most stability configuration of $\mathrm{C_{2}B_{3}H_{5}}$. (c) The ball-stick representation of the most stability configuration of $\mathrm{C_{2}B_{13}H_{15}}$. (d) The interaction Betti curve of the most stability configuration of $\mathrm{C_{2}B_{3}H_{5}}$ in dimension 0. (e) The interaction Betti curve of the most stability configuration of $\mathrm{C_{2}B_{3}H_{5}}$ in dimension 1. (f) The interaction spectral gap curve of the most stability configuration of $\mathrm{C_{2}B_{3}H_{5}}$ in dimension 0. (g) The interaction spectral gap curve of the most stability configuration of $\mathrm{C_{2}B_{3}H_{5}}$ in dimension 1. (h) The interaction spectral gap curve of the most stability configuration of $\mathrm{C_{2}B_{13}H_{15}}$ in dimension 0. (i) The interaction spectral gap curve of the most stability configuration of $\mathrm{C_{2}B_{13}H_{15}}$ in dimension 1. }\label{figure:molecules}
\end{figure}

Figure \ref{figure:molecules} illustrates the interaction feature curves of the most stable configurations of $\mathrm{C_{2}B_{3}H_{5}}$ and $\mathrm{C_{2}B_{13}H_{15}}$. In our dataset, $\mathrm{C_{2}B_{3}H_{5}}$ exhibits six possible configurations, while $\mathrm{C_{2}B_{13}H_{15}}$ has 135 configurations. Figures \ref{figure:molecules} (b) and (c) depict the bond-stick representations of the most stable configurations for $\mathrm{C_{2}B_{3}H_{5}}$ and $\mathrm{C_{2}B_{13}H_{15}}$, respectively. Figures \ref{figure:molecules} (d) and (e) correspond to the 0-dimensional and 1-dimensional interaction Betti curves for $\mathrm{C_{2}B_{3}H_{5}}$, revealing limited information due to the challenging formation of interaction cycles between $\{\mathrm{C_{2}B_{3}}\}$ and $\{\mathrm{C_{2}H_{5}}\}$. Figures \ref{figure:molecules} (f) and (g) showcase the 0-dimensional and 1-dimensional interaction spectral gaps of $\mathrm{C_{2}B_{3}H_{5}}$, providing more enriched information compared to interaction homology. Meanwhile, the 0-dimensional and 1-dimensional spectral gaps for $\mathrm{C_{2}B_{13}H_{15}}$ are illustrated in Figures \ref{figure:molecules} (h) and (i). This example highlights the rich geometric information inherent in interaction Laplacians, suggesting potential applications.

\begin{figure}[htbp]
  \centering
  \includegraphics[width=0.9\textwidth]{./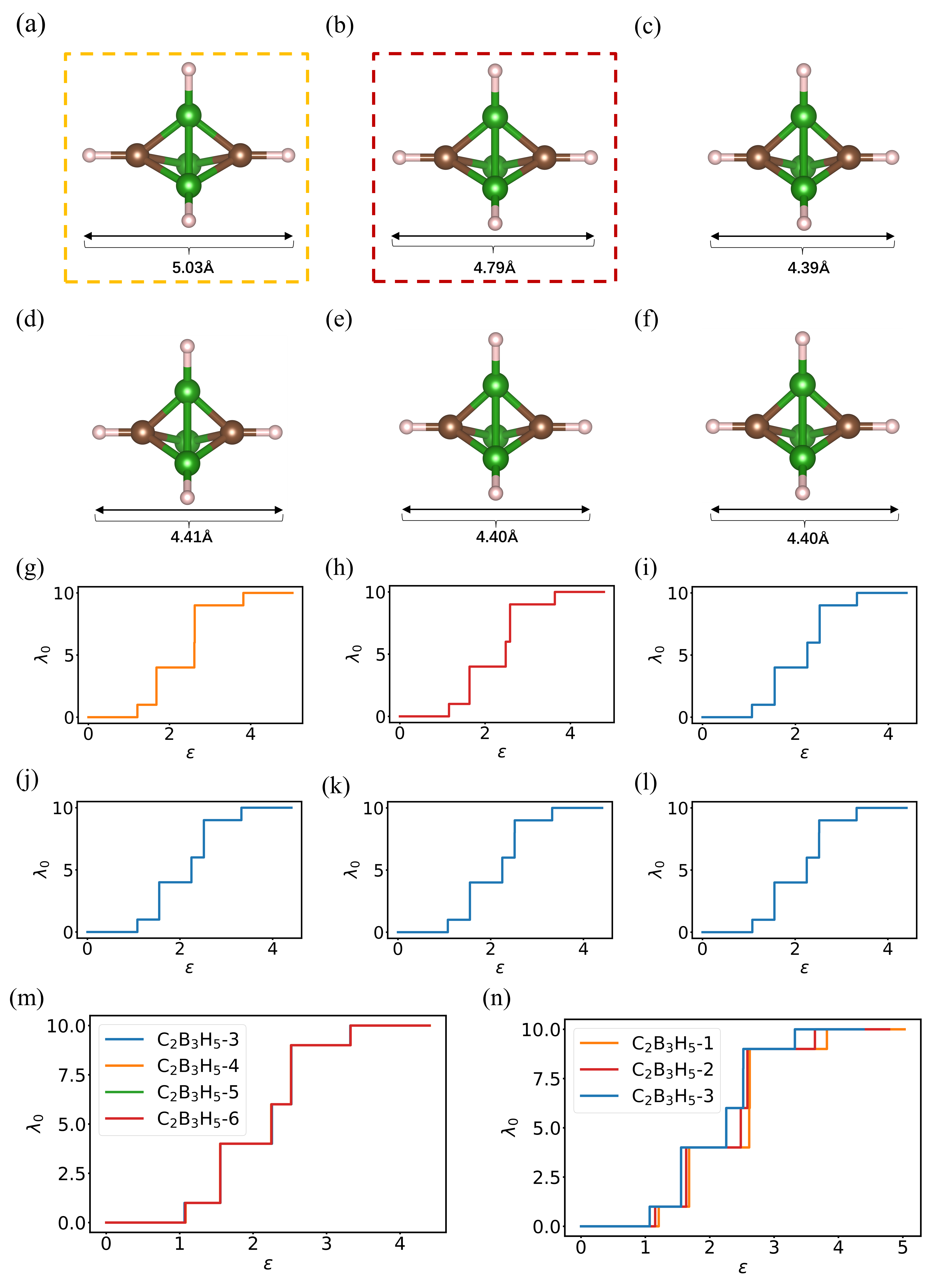}\\
  \caption{(a)-(f) Representation of the six distinct configurations of $\mathrm{C_{2}B_{3}H_{5}}$. These configurations are individually denoted as $\mathrm{C_{2}B_{3}H_{5}}$-1, $\mathrm{C_{2}B_{3}H_{5}}$-2, ... , $\mathrm{C_{2}B_{3}H_{5}}$-6. (g)-(l) Illustration of the interaction spectral gap curves $\tilde{\lambda}_{0}(t)$ for the six different configurations of $\mathrm{C_{2}B_{3}H_{5}}$ in dimension 0. (m) Compilation of interaction spectral gap curves for the configurations $\mathrm{C_{2}B_{3}H_{5}}$-3, $\mathrm{C_{2}B_{3}H_{5}}$-4, $\mathrm{C_{2}B_{3}H_{5}}$-5, and $\mathrm{C_{2}B_{3}H_{5}}$-6. (n) Comparison of the interaction spectral gap curves of the configurations $\mathrm{C_{2}B_{3}H_{5}}$-1, $\mathrm{C_{2}B_{3}H_{5}}$-2, and $\mathrm{C_{2}B_{3}H_{5}}$-3.}\label{figure:B3collection}
\end{figure}

Figure \ref{figure:B3collection} illustrates the six distinct configurations of $\mathrm{C_{2}B_{3}H_{5}}$ and provides the interaction spectral gap curves in dimension 0 for each configuration. Upon careful visual inspection in Figure \ref{figure:B3collection}(a) and (b), we observe subtle differences in the C-H bonds, C-B bonds, B-H bond lengths, and overall shapes between configurations $\mathrm{C_{2}B_{3}H_{5}}$-1 and $\mathrm{C_{2}B_{3}H_{5}}$-2 compared to the other configurations. However, distinctions are harder to discern in Figures \ref{figure:B3collection}(c), (d), (e), (f). The interaction spectral gap curves displayed in Figures \ref{figure:B3collection}(g)-(l) reveal significant differences, particularly for configurations $\mathrm{C_{2}B_{3}H_{5}}$-1 and $\mathrm{C_{2}B_{3}H_{5}}$-2 when compared to the other configurations. As shown in Figure \ref{figure:B3collection}(m), configurations $\mathrm{C_{2}B_{3}H_{5}}$-3, $\mathrm{C_{2}B_{3}H_{5}}$-4, $\mathrm{C_{2}B_{3}H_{5}}$-5, and $\mathrm{C_{2}B_{3}H_{5}}$-6 appear nearly identical. In contrast, Figure \ref{figure:B3collection}(n) indicates significant differences among configurations $\mathrm{C_{2}B_{3}H_{5}}$-1, $\mathrm{C_{2}B_{3}H_{5}}$-2, and $\mathrm{C_{2}B_{3}H_{5}}$-3. Note that $\mathrm{C_{2}B_{3}H_{5}}$-5 and $\mathrm{C_{2}B_{3}H_{5}}$-6 are enantiomers of each other, both possessing an identical enthalpy of formation of -153.810980788 eV \cite{becke2007quantum}. For each configuration $X$ of $\mathrm{C_{2}B_{3}H_{5}}$, we calculate the relative enthalpy of formation using the equation
\begin{equation*}
\Delta E = E(X) - E(\mathrm{C_{2}B_{3}H_{5}}\text{-5}) = E(X) - 153.810980788 .
\end{equation*}
The resulting relative enthalpies of formation are summarized in Table \ref{table:enthalpy}.
\begin{table}[htb!]
\renewcommand{\arraystretch}{1.2}
  \centering
  \caption{The relative enthalpy of formation of the configurations of $\mathrm{C_{2}B_{3}H_{5}}$.}\label{table:enthalpy}
  \begin{tabular}{c|c|c|c|c|c|c}
    \hline
    $\mathrm{C_{2}B_{3}H_{5}}$ & $\mathrm{C_{2}B_{3}H_{5}}$-1 & $\mathrm{C_{2}B_{3}H_{5}}$-2 & $\mathrm{C_{2}B_{3}H_{5}}$-3 & $\mathrm{C_{2}B_{3}H_{5}}$-4 & $\mathrm{C_{2}B_{3}H_{5}}$-5 & $\mathrm{C_{2}B_{3}H_{5}}$-6 \\
    \hline
    $\Delta E$ & 0.078485346 & 0.033148533 & 0.000177721 & 0.00002564 & 0.0 & 0.0 \\
    \hline
  \end{tabular}
\end{table}
From Table \ref{table:enthalpy}, it is evident that the enthalpies of formation for the configurations $\mathrm{C_{2}B_{3}H_{5}}$-3, $\mathrm{C_{2}B_{3}H_{5}}$-4, $\mathrm{C_{2}B_{3}H_{5}}$-5, and $\mathrm{C_{2}B_{3}H_{5}}$-6 are quite close. However, the enthalpies of formation for the configurations $\mathrm{C_{2}B_{3}H_{5}}$-1 and $\mathrm{C_{2}B_{3}H_{5}}$-2 differ significantly from those of the other configurations. Therefore, the information from interaction Laplacians indeed reflects the similarities and differences among various configurationic forms of $\mathrm{C_{2}B_{3}H_{5}}$.

\begin{figure}[htbp]
  \centering
  \includegraphics[width=1\textwidth]{./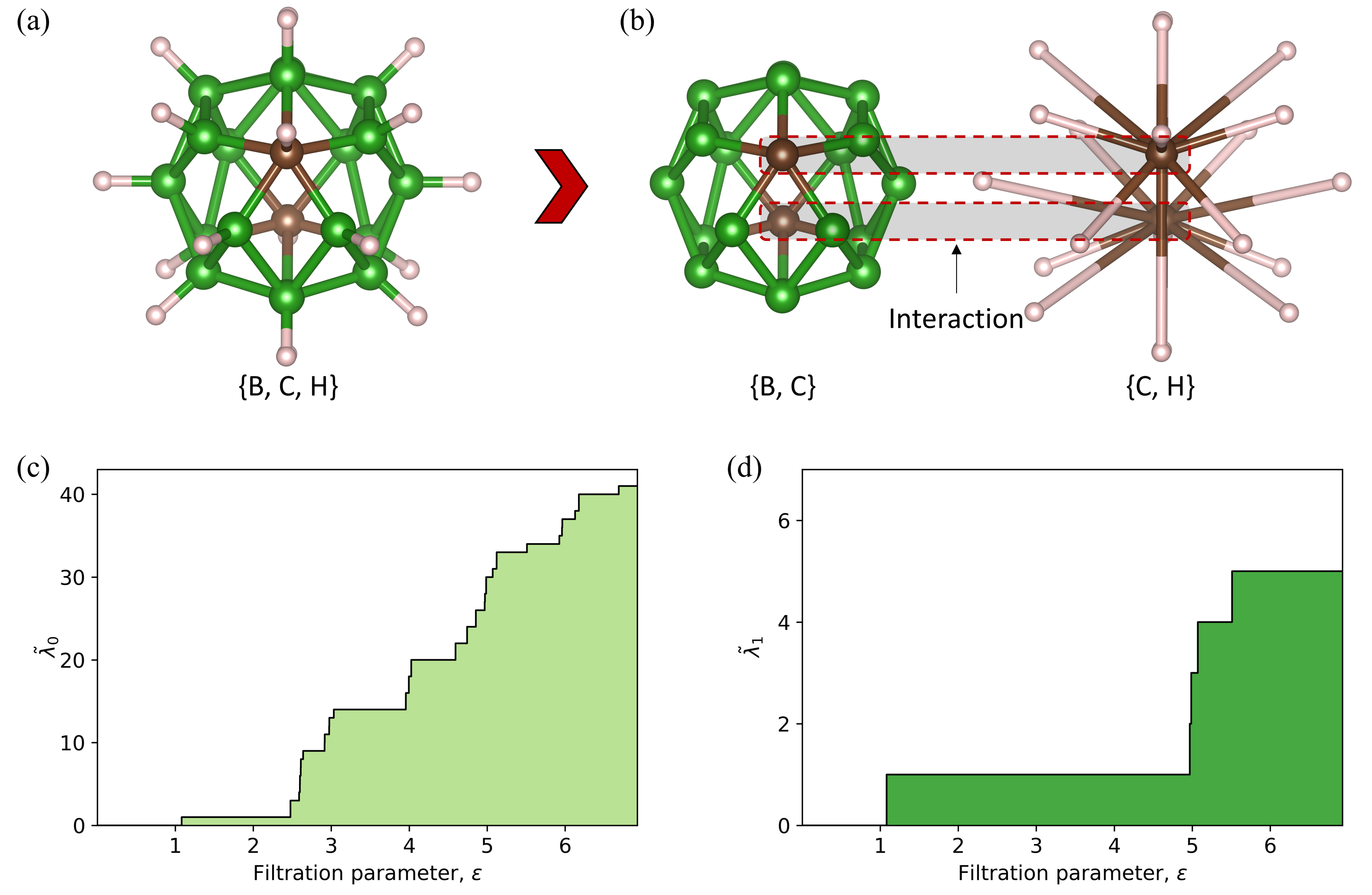}\\
  \caption{(a) The ball-stick representation of $\mathrm{C_{2}B_{18}H_{20}}$. (b) The representation of interaction between $\{\mathrm{C_{2}B_{18}}\}$ and $\{\mathrm{C_{2}H_{20}}\}$ within $\mathrm{C_{2}B_{18}H_{20}}$. (c) The simplicial spectral gap curve of $\mathrm{C_{2}B_{18}H_{20}}$ in dimension 0. (d) The interaction spectral gap curve of $\mathrm{C_{2}B_{18}H_{20}}$ in dimension 0.}\label{figure:B18_exam}
\end{figure}

Compared to classical persistent homology and persistent Laplacian, interaction persistent homology and persistent Laplacian are more capable of reflecting the interactive relationships among different components within a system. Therefore, the information provided by interaction topological features may not necessarily be more comprehensive than that of classical persistent homology and persistent Laplacian. Examining Figure \ref{table:runtime}, when comparing Laplacian information generated by the classic Vietoris-Rips complex with that of the interaction Vietoris-Rips complex, we observe that interaction topological information tends to be simpler. This is because interactions primarily reflect specific interactive information, which is often less globally informative but distinct from overall information. This underscores the potential applications of interaction topology. Particularly, in the case of multi-component systems, where the focus is often on relationships between different parts, the advantages of interaction homology and interaction Laplacian become evident. Furthermore, in our example, the computational efficiency of interaction spectral gaps is notable. We considered closo-carboranes $\mathrm{C_{2}B_{13}H_{15}}$ and $\mathrm{C_{2}B_{18}H_{20}}$, running the computations on our laptop (CPU: AMD Ryzen 5 5600H). As shown in Table \ref{table:runtime}, when comparing the computation time of spectral gap curves between Vietoris-Rips complex and interaction Vietoris-Rips complex, based on the same code design and operating environment, we found that the computational efficiency of interaction spectral gap curves was significantly superior to classic persistent spectral gap curves based on Vietoris-Rips complexes.

\begin{table}[htb!]
\renewcommand{\arraystretch}{1.2}
  \centering
    \caption{The runtime of the curves for the smallest positive eigenvalues of the Laplacians for $\mathrm{C_{2}B_{13}H_{15}}$ and $\mathrm{C_{2}B_{18}H_{20}}$ in dimensions 0 and 1.}\label{table:runtime}
  \begin{tabular}{c|c|c|c|c}
    \hline
    Spectral gaps & $\tilde{\lambda}_{0}$ -- $\mathrm{C_{2}B_{13}H_{15}}$ & $\tilde{\lambda}_{1}$ -- $\mathrm{C_{2}B_{13}H_{15}}$ & $\tilde{\lambda}_{0}$ -- $\mathrm{C_{2}B_{18}H_{20}}$ & $\tilde{\lambda}_{1}$ -- $\mathrm{C_{2}B_{18}H_{20}}$ \\
    \hline
    \hline
    VR complexes & 132.41s &  130.05s & 2866.85s & 30850.31s \\
    Interation VR complexes& 48.23 & 47.99s  & 131.27 & 131.94s \\
    \hline
  \end{tabular}
\end{table}

\subsection{Interaction within chlorophyll}

Chlorophyll is a green pigment found in plants, algae, and some bacteria, and it plays a vital role in photosynthesis, the process by which plants convert light energy into chemical energy. Chlorophyll molecules contain elements such as C, H, O, N, and Mg, with nitrogen (N) and magnesium (Mg) being crucial for photosynthesis. The magnesium atom sits at the core of the chlorophyll molecule, forming its central structure, aiding in the absorption of light energy to excite electron states. Meanwhile, nitrogen primarily stabilizes the molecular structure and plays a vital role in the propagation of electrons and energy. The combined action of nitrogen and magnesium in chlorophyll facilitates the absorption of light energy and its conversion into chemical energy.

Considering chlorophyll-a $\mathrm{C}_{55}\mathrm{H}_{72}\mathrm{O}_{5}\mathrm{N}_{4}\mathrm{Mg}$ and chlorophyll-b $\mathrm{C}_{55}\mathrm{H}_{70}\mathrm{O}_{6}\mathrm{N}_{4}\mathrm{Mg}$ within our mathematical model, we treat the magnesium and nitrogen atoms as a cohesive unit responsible for energy absorption and generation. On the other hand, we regard nitrogen atoms and other chlorophyll structures as a cohesive unit responsible for the complex propagation and utilization of energy. Thus, we delineate two interacting components: $\{\mathrm{C,H,O,N}\}$ and $\{\mathrm{N,Mg}\}$, and compute their persistent interaction homology and persistent interaction Laplacian. Chlorophyll-a and chlorophyll-b are both large molecules, composed of 137 and 136 atoms respectively. Computing their topological properties using the traditional Vietoris-Rips complex and persistent homology would be highly demanding and time-consuming. By employing the interaction persistent homology with persistence parameters ranging from 0{\AA} to 6{\AA}. On our laptop setup (CPU: AMD Ryzen 5 5600H), the computational time is estimated to range from approximately 4000 to 8000 seconds.

As illustrated in Figure \ref{figure:chlorophyll}(a) and (b),  chlorophyll-b closely resembles chlorophyll-a, distinguished by the presence of an additional carbonyl group on one of its rings. The disparities between chlorophyll-a and chlorophyll-b can be discerned through their interaction Laplacians. Specifically, the interaction spectral gap curves of chlorophyll-a and chlorophyll-b exhibit distinct behaviors, as depicted in Figure \ref{figure:chlorophyll}(c) and (d). In Figure \ref{figure:chlorophyll}(e) and (f), we observe the evolution of the interaction Vietoris-Rips complexes corresponding to chlorophyll-a and chlorophyll-b molecules. We find that the interaction Laplacian proves to be more efficient and practical in studying large molecules with multiple elements at certain times. Of course, for a detailed investigation of the interaction topology of molecules, it is necessary to differentiate the coordinates of different elements within large molecules and to select appropriate interaction objects. Under such conditions, we can then compute the interaction topological features.

\begin{figure}[htb!]
  \centering
  \includegraphics[width=1\textwidth]{./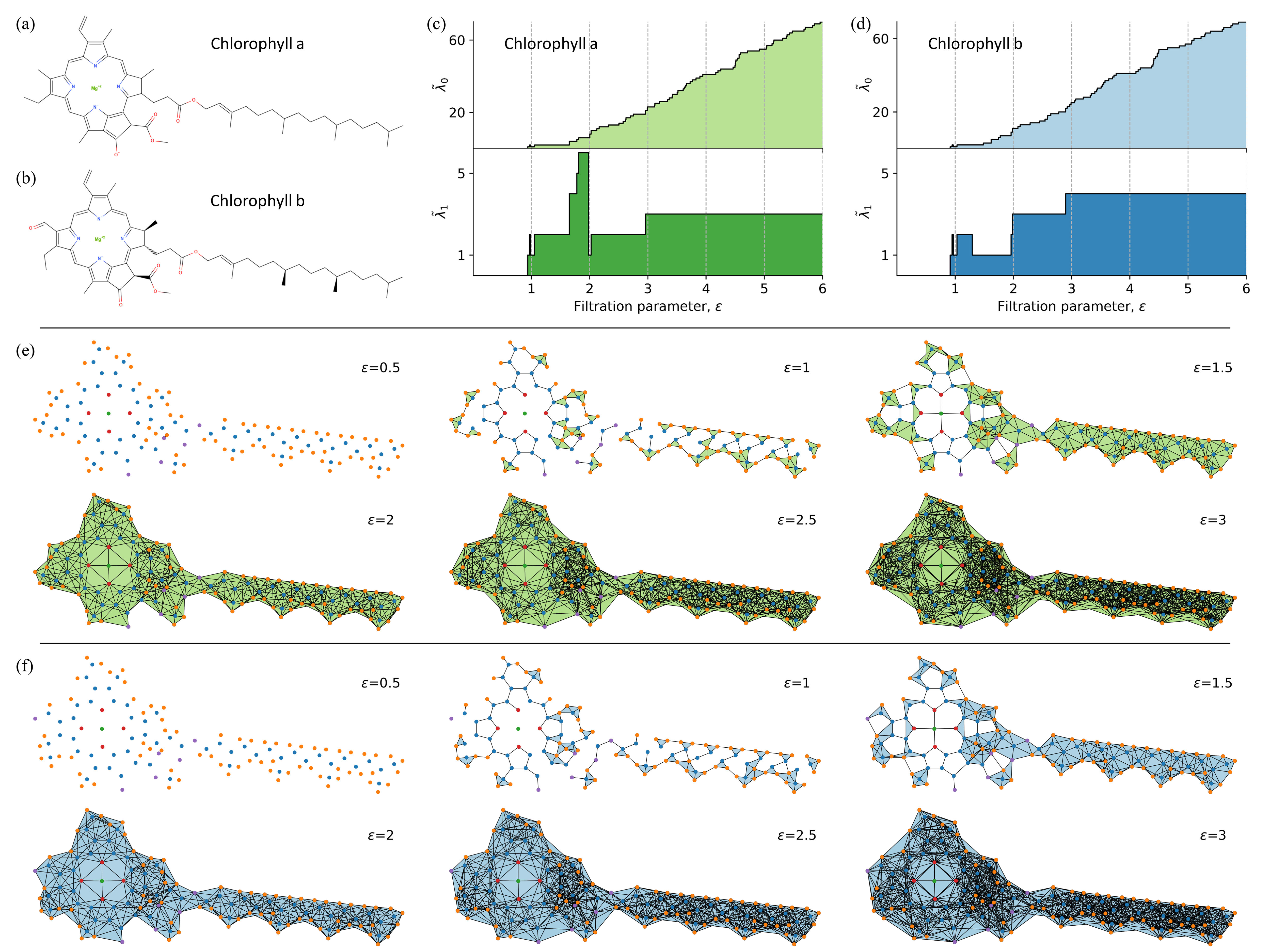}\\
  \caption{(a) The molecular structure representations of chlorophyll-a. (b) The molecular structure representations of chlorophyll-b. (c) The interaction spectral gap curves of chlorophyll-a in dimension 0 and dimension 1. (d) The interaction spectral gap curves of chlorophyll-b in dimension 0 and dimension 1. (e) The filtration of interaction Vietoris-Rips complexes of chlorophyll-a. (f) The filtration of interaction Vietoris-Rips complexes of chlorophyll-b. }\label{figure:chlorophyll}
\end{figure}

\section{Conclusion}

Interaction provides insights into the mutual influence among various elements within a system. Understanding interactions helps elucidate how these components affect each other's behaviors, properties, and functions, often resulting in emergent phenomena or system-level outcomes that cannot be fully explained by considering individual elements in isolation or the point cloud as a whole. However, studying interactions within systems and spaces is often a highly complex task, involving numerous intricate details and sources of interference. From a topological perspective, investigating interaction behaviors is highly worthwhile for two main reasons: topology can grasp the essential elements and holistic features of data; and secondly, element-specific topological features are conveniently computable and possess strong robustness. For these reasons, we focus on applying theories related to interaction topology in data science.

In this work, we present the concepts of persistent interaction homology (PIH) and persistent interaction Laplacian (PIL). The interaction topology focuses on the homotopy types of intersections between subspaces within a space, along with their corresponding homology invariants. We begin by establishing the concept of the interaction Laplacian, wherein its kernel space is isomorphic to the interaction homology. Building upon the foundation of interaction topology, we introduce PIH and PILs as innovative methods within the field of topological data analysis. Furthermore, we also demonstrate the stability of persistent interaction homology as a persistent module. For finite point sets in Euclidean spaces, we provide the construction of interaction Vietoris-Rips complexes, enabling the computation of persistent interaction homology and persistent interaction Laplacians for finite point sets. The proposed persistent interaction topology can be utilized to extract  element-specific topological information, which is essential to the success of TDA \cite{cang2017topologynet}.
In practical applications, we utilize PIH and PILs for extracting topological features from closo-carboranes and chlorophyll molecules, demonstrating the utility of the propsoed methods.

In the future, we envision further advancements in the methodology of PIH and PILs. On a theoretical part, PIH and PILs currently reside at a conceptual stage, awaiting deeper exploration of the underlying mathematical principles and ideas. In applications, we aspire to see the use of PIH and PILs in data science,  tackling specific challenges in physics, materials science, molecular biology, and beyond. We believe that PIH and PILs will play a significant role in a wide range of fields.

\section*{Data and code availability}

The data and source coda used in this work is publicly available in the Github repository: \url{https://github.com/WeilabMSU/InteractionTop}.

\section*{Acknowledgments}
This work was supported in part by NIH grants  R01GM126189 and  R01AI164266, NSF grants DMS-2052983,  DMS-1761320, and IIS-1900473,  NASA grant 80NSSC21M0023,  MSU Foundation,  Bristol-Myers Squibb 65109, and Pfizer.

\bibliographystyle{plain}  
\bibliography{Reference}

\end{document}